\newtheorem{lemma}{Lemma}
\newtheorem{theorem}[lemma]{Theorem}
\newcommand{\Z}{\mathbb{Z}}
\newcommand{\Q}{\mathbb{Q}}
\newcommand{\R}{\mathbb{R}}
\newcommand{\F}{\mathbb{F}}
\newcommand{\C}{\mathcal{C}}
\newcommand{\gothp}{\mathfrak{p}}
\newcommand{\dd}{\tilde{d}}
\newcommand{\e}{\tilde{e}}
\newcommand{\softO}{\widetilde{O}}
\DeclareMathOperator{\Res}{Res}
\DeclareMathOperator{\End}{End}
\DeclareMathOperator{\Jac}{Jac}
\DeclareMathOperator{\GCD}{GCD}
\DeclareMathOperator{\maxdeg}{\mathrm{maxdeg}}
\DeclareMathOperator{\Ker}{Ker}
\begin{document}

\title[Counting points on genus-3 hyperelliptic
curves with explicit RM]
{Counting points on genus-3 hyperelliptic
curves with explicit real multiplication}

\iftrue
\author{Simon Abelard}
\address{Université de Lorraine, CNRS, Inria}
\email{simon.abelard@loria.fr}

\author{Pierrick Gaudry}
\email{pierrick.gaudry@loria.fr}

\author{Pierre-Jean Spaenlehauer}
\email{pierre-jean.spaenlehauer@loria.fr}
\else
\author{}
\fi

\date{}

\begin{abstract} We propose a Las Vegas probabilistic algorithm to compute the
  zeta function of a genus-3 hyperelliptic curve defined over a finite field
  $\mathbb F_q$, with
  explicit real multiplication by an order $\mathbb Z[\eta]$ in a totally real cubic
  field.
  Our main result states that this algorithm requires
  an expected number of $\softO((\log q)^6)$ bit-operations, where the
  constant in the $\softO()$ depends on the ring $\mathbb Z[\eta]$ and on the degrees of polynomials
  representing the endomorphism $\eta$.
  As a proof-of-concept, we compute the zeta function of a curve
  defined over a 64-bit prime field, with explicit real multiplication by
  $\Z[2\cos(2\pi/7)]$.
\end{abstract}

\maketitle

\thispagestyle{empty}
\section{Introduction}

Since the discovery of Schoof's algorithm~\cite{Sc85}, the problem of computing
efficiently zeta functions of curves defined
over finite fields has attracted a lot of attention, as its applications range
from the construction of cryptographic curves to testing conjectures
in number theory. 
We focus on the problem of computing the zeta function of a hyperelliptic curve
$\C$ of
genus~3 defined over a finite field $\mathbb F_q$ using $\ell$-adic methods, in the spirit of Schoof's algorithm and
its generalizations \cite{Pi90,HI98,AH01}. Although
these methods are polynomial with respect to $\log q$, the exponents in the best known complexity bounds grow quickly
with the genus. Another line of research is to use $p$-adic
methods~\cite{Ked01,Satoh00,carls2009p,harrison2012extension}, which are polynomial in the
genus but exponential in the size of the characteristic of the
underlying finite field. Variants of these
methods~\cite{KedSuth08,Harvey14,HarSuth16} allow to count the
points of a curve defined over the rationals modulo many primes in
average polynomial time, which is especially relevant when experimenting
with the Sato-Tate conjecture.

The aim of this paper is to show --- both with theoretical proofs and practical
experiments --- that the complexity of $\ell$-adic methods for
genus-$3$ hyperelliptic curves can be dramatically decreased as soon as an
explicitly computable non-integer
endomorphism $\eta\in \End(\Jac(\C))$ is known. More precisely, we say that a
curve $\C$ has \emph{explicit real multiplication} by $\mathbb Z[\eta]$ if the subring 
$\mathbb Z[\eta]\subset \End(\Jac(\C))$ is isomorphic to an order in a totally
real cubic number field, and if we have explicit formulas describing
$\eta(P-\infty)$ for some fixed base point $\infty$ and a
generic point $P$ of $C$. By explicit formulas, we mean polynomials
$(\eta^{(u)}_i(x,y))_{i\in\{0,1,2,3\}}$ and
$(\eta^{(v)}_i(x,y))_{i\in\{0,1,2,3\}}$ in $\mathbb F_q[x,y]$,
 such that, when $\C$
is given in odd-degree Weierstrass form, the Mumford coordinates of $\eta( (x, y)-\infty)$ are
$\Big\langle \sum_{i=0}^3
{\eta^{(u)}_i(x,y)}X^i, \sum_{i=0}^2 
\big({\eta^{(v)}_i(x,y)}/{\eta^{(v)}_3(x,y)}\big)X^i\Big\rangle,$ where $(x,y)$ is the generic point
of the curve. In cases where $\C$ does not have an odd-degree Weierstrass
model, we can work in an extension of degree at most $8$ of the base
field in order to ensure the existence of a rational Weierstrass point.

The influence of real multiplication on the complexity of
point counting was investigated for genus 2 curves in~\cite{GKS11}, where
the authors decrease the complexity from $\softO((\log q)^8)$~\cite{GS12} to
$\softO( (\log q)^5 )$.
For genus $2$ curves, another related active line of research is to mimic the
improvement of Elkies and Atkin by using modular
polynomials~\cite{ballentine2017isogenies}. However, the main difficulty of
this method is to precompute the modular polynomials, which are much larger
than their genus $1$ counterparts.

Our main result is the following theorem.
\begin{theorem} 
  Let $\C$ be a genus-3 hyperelliptic curve defined over a finite field
  $\mathbb F_q$ having explicit real multiplication by $\mathbb Z[\eta]$, where
  $\eta\in\End(\Jac(C))$. We assume
  that $\C$ is given by an odd-degree Weierstrass equation $Y^2=f(X)$.  The
  characteristic polynomial of the Frobenius endomorphism on the Jacobian of
  $\C$ can be computed with a Las Vegas probabilistic algorithm in expected
  time bounded by $c\,(\log q)^6(\log\log q)^k$, where $k$ is an absolute
  constant and $c$ depends only on the
  degrees of the polynomials $\eta_i^{(u)}$ and $\eta_i^{(v)}$ and on the ring
  $\mathbb Z[\eta]$.
\end{theorem}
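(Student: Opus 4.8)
\emph{Proof idea.} I would follow the Schoof--Pila strategy in the real-multiplication variant of~\cite{GKS11}, exploiting the cubic field $\Q(\eta)$ to keep the torsion objects of size polynomial in $\ell$ rather than $\ell^{6}$.

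\textbf{Reduction to the real Frobenius modulo small primes.} Let $\chi(T)\in\Z[T]$ be the characteristic polynomial of Frobenius $\pi$ on $\Jac(\C)$; it is monic of degree $6$ with roots in pairs $\{\alpha,q/\alpha\}$, so $\chi(T)=\prod_{j=1}^{3}(T^{2}-s_jT+q)$. The Rosati involution sends $\pi$ to $\hat\pi:=q\pi^{-1}$, so $\pi^{+}:=\pi+\hat\pi\in\End(\Jac(\C))$ is Rosati-fixed, commutes with $\eta$, and lies in the totally real field $\Q(\eta)$; writing $\pi^{+}=c_0+c_1\eta+c_2\eta^{2}$, the Weil bound $|\sigma(\pi^{+})|\le 2\sqrt q$ over the three real embeddings $\sigma$ of $\Q(\eta)$ forces $|c_i|\le C\sqrt q$, with $C$ and a bound on the common denominator depending only on $\Z[\eta]$; moreover $\chi(T)=\Res_S\bigl(m_\eta(S),\,T^{2}-(c_0+c_1S+c_2S^{2})T+q\bigr)$, where $m_\eta$ is the minimal polynomial of $\eta$. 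Hence it suffices to recover $\pi^{+}$, and for that to compute $\pi^{+}\bmod\mathfrak l$ for enough prime ideals $\mathfrak l\subset\Z[\eta]$ of prime norm, then to apply the Chinese Remainder Theorem (together with rational reconstruction, to handle the case $\pi^{+}\notin\Z[\eta]$). A positive proportion of rational primes $\ell$ admit a degree-one factor $\mathfrak l=(\ell,\eta-\lambda)$ in $\Z[\eta]$ (Chebotarev); excluding the finitely many $\ell$ dividing $q$, $\mathrm{disc}(m_\eta)$, the conductor of $\Z[\eta]$, or otherwise bad, and summing $\log\ell$ over the good ones, one gets that $O(\log q/\log\log q)$ such primes, each of size $\ell=O(\log q)$, suffice.

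\textbf{Local step at a split prime.} For such an $\mathfrak l$, put $V_{\mathfrak l}:=\Ker(\ell)\cap\Ker(\eta-\lambda)\subset\Jac(\C)$. Under the genericity hypotheses (excluded primes aside) this is an $\F_\ell$-vector space of dimension $2$: it is the $\mathfrak l$-torsion of the $\Z[\eta]$-module $\Jac(\C)$, of cardinality $\norm(\mathfrak l)^{2}=\ell^{2}$ --- and \emph{this} is where real multiplication is essential, since without it the relevant torsion would have cardinality $\ell^{6}$. Since $\pi$ commutes with $\eta$, which acts on $V_{\mathfrak l}$ as the scalar $\lambda$, and since the Weil pairing restricts to a perfect pairing on $V_{\mathfrak l}$ scaled by $q$ under Frobenius, $\pi$ satisfies $\pi^{2}-[s_{\mathfrak l}]\pi+[q]=0$ on $V_{\mathfrak l}$, where $s_{\mathfrak l}\in\F_\ell$ is the image of $\pi^{+}$ under $\Z[\eta]\twoheadrightarrow\Z[\eta]/\mathfrak l=\F_\ell$. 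So computing $\pi^{+}\bmod\mathfrak l$ means finding the unique $s\in\{0,\dots,\ell-1\}$ for which $\pi^{2}-[s]\pi+[q]$ kills $V_{\mathfrak l}$. To test this symbolically I would set up a polynomial model of the zero-dimensional scheme $V_{\mathfrak l}$: represent degree-$3$ divisor classes in Mumford form, write $\eta(D)=\lambda D$ by applying the explicit polynomials $\eta^{(u)}_i,\eta^{(v)}_i$ to the points in the support of $D$ and recombining via the geometric group law, impose $\ell D=0$ as well, and eliminate variables to obtain a triangular description of the coordinate ring $\mathbb A_{\mathfrak l}$, an $\F_q$-algebra of dimension $\ell^{2}-1$. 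Then Frobenius on the generic Mumford coordinates of a point of $V_{\mathfrak l}$ is computed by $q$-th powering in $\mathbb A_{\mathfrak l}$, and for each candidate $s$ one checks $\pi^{2}(D)-[s]\pi(D)+[q]D=0$ in $\Jac(\C)(\mathbb A_{\mathfrak l})$ with $O(1)$ group operations; the value that passes is $s_{\mathfrak l}$.

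\textbf{Complexity, and where the difficulty lies.} The $q$-th powerings cost $\softO(\ell^{2}\log q)$ operations in $\F_q$ per prime and the candidate search $\softO(\ell^{3})$, summing to $\softO((\log q)^{5})$ bit-operations overall; the dominant cost is the elimination step producing $\mathbb A_{\mathfrak l}$: that scheme has degree $\ell^{2}$, and a careful degree analysis (accounting for the group law and for substituting the $\eta$-formulas, whence the dependence of $c$ on their degrees) should show it can be computed in $\softO(\ell^{4})$ operations in $\F_q$, which over the $O(\log q/\log\log q)$ primes and at $\softO(\log q)$ bit-operations per $\F_q$-operation gives $\softO((\log q)^{6})$ bit-operations; the remaining $\log\log q$ powers absorb fast polynomial arithmetic and the randomized, always-correct finite-field factorizations used to split $\mathbb A_{\mathfrak l}$ into fields, find primitive elements and extract roots, which is what makes the algorithm Las Vegas. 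I expect the genuine obstacle to be precisely this local step: correctly setting up the $\mathfrak l$-torsion system for a genus-$3$ Jacobian from the explicit real-multiplication data, bounding all the degrees that arise, proving the eliminated system has degree $\ell^{2}$ and can be handled within the stated bound, and controlling the degenerate configurations --- divisor classes not in general position, $V_{\mathfrak l}$ of dimension below $2$, coincidences among the eigenvalues of $\eta$ modulo $\ell$ --- which concern only finitely many $\ell$, are thus folded into the constant $c$, but must be detected during the computation, which is why only an expected running time is claimed.
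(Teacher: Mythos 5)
Your overall strategy is the same as the paper's: pass to $\psi=\pi+\pi^{\vee}$ in the real cubic field, bound its coordinates in the basis $1,\eta,\eta^2$ by $O(\sqrt q)$, recover $\psi$ modulo degree-one primes of $\Z[\eta]$ by finding the scalar by which it acts on a $2$-dimensional piece of the $\ell$-torsion, and finish by CRT. Your derivation of the $O(\sqrt q)$ bound from the three real embeddings and the inverse Vandermonde matrix is in fact cleaner than the paper's argument (which eliminates $a,b,c$ via a weighted-homogeneous Gr\"obner basis of the system relating them to $\sigma_1,\sigma_2,\sigma_3$); and your use of arbitrary degree-one primes rather than totally split rational primes is a harmless generalization that the paper itself mentions in passing.

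The genuine gap is in the local step, precisely where you say the difficulty lies. You model the $\gothp$-torsion as $\Ker(\ell)\cap\Ker(\eta-\lambda)$ and propose to ``impose $\ell D=0$ as well'' before eliminating. But the condition $\ell D=0$ is expressed through Cantor's $\ell$-division polynomials, whose coefficients have degree $\Theta(\ell^2)$ in each variable; a resultant (or Gr\"obner) cascade starting from trivariate equations of degree $O(\ell^2)$ per variable produces intermediate polynomials of degree $O(\ell^4)$ and then $O(\ell^8)$ and costs on the order of $\softO(\ell^{10})$ field operations, which destroys the $\softO((\log q)^6)$ bound. The missing idea is to avoid the $\ell$-division condition entirely: by Minkowski's theorem applied to the lattice of $(a_0,a_1,a_2)$ with $a_0+a_1\eta+a_2\eta^2\in\gothp$, the ideal $\gothp$ contains a nonzero element $\alpha=a_0+a_1\eta+a_2\eta^2$ with $|a_j|=O(\ell^{1/3})$ and norm $O(\ell)$, and one computes $\Ker\alpha$ instead (it contains the desired $(\Z/\ell\Z)^2$ as its unique subgroup of order $\ell^2$ for all but finitely many $\ell$). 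Combined with the quadratic degree bound on Cantor's $n$-division polynomials, the single condition $\alpha(D)=0$ yields equations of degree only $O((\ell^{1/3})^2)=O(\ell^{2/3})$ per variable, so the two-stage resultant elimination stays within $\softO(\ell^4)$, the eliminated univariate polynomial has degree $O(\ell^{8/3})$, and intersecting the $u$- and $v$-coordinate systems cuts it down to the true degree $O(\ell^2)$. Without this device (or an equivalent one), your assertion that the elimination producing $\mathbb A_{\mathfrak l}$ costs $\softO(\ell^4)$ is unsubstantiated, and it is exactly the point on which the whole complexity claim turns.
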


In this paper, we use the notation $\softO()$ as a shorthand for complexity
statements hiding poly-logarithmic terms: the complexity in the theorem would
be abbreviated $\softO((\log q)^6)$. We insist on the fact that all
the $O()$ and the $\softO()$ notation used throughout the paper should be
understood up to a multiplicative constant which may depend on the ring
$\Z[\eta]$ and on the degrees of the polynomials $\eta_i^{(u)}$ and
$\eta_i^{(v)}$. There are natural families of
curves for which these degrees are bounded by an absolute constant and for
which $\mathbb Z[\eta]$ is fixed: reductions at primes (of good reduction) of a
hyperelliptic curve with explicit RM defined over a number field.

As in Schoof's algorithm and its generalizations in~\cite{Pi90,HI98,AH01}, the $\ell$-adic approach consists in computing the characteristic polynomial of the
Frobenius endomorphism by computing its action on the $\ell$-torsion of
the Jacobian of the curve for sufficiently many $\ell$.
In order to prove the claimed complexity bound, we consider primes
$\ell\in\mathbb Z$ such that $\ell\mathbb Z[\eta]$
splits as a product $\gothp_1\gothp_2\gothp_3$ of prime ideals. Computing the kernels of endomorphisms $\alpha_i$
in each $\gothp_i$ provides us with an algebraic representation of
the $\ell$-torsion $\Jac(\C)[\ell]\subset \Ker
\alpha_1 + \Ker\alpha_2 + \Ker\alpha_3$. Then, we compute from this representation
integers $a,b,c\in\Z/\ell\Z$ such that
the sum $\pi+\pi^{\vee}$ of the Frobenius endomorphism and its dual
equals $a+b\eta+c\eta^2\bmod
\ell$. Once enough modular information is known, the values of $a$, $b$,
$c$ such that $\pi+\pi^{\vee} = a+b\eta+c\eta^2$ are recovered 
via the Chinese Remainder Theorem and 
the coefficients of the characteristic polynomial of the Frobenius
can be directly expressed in terms of $a$, $b$ and $c$.
In fact, in practice we do not have to restrict
to split primes:
any partial
factorization of $\ell\mathbb Z[\eta]$ provides some modular information
on $a, b, c\bmod \ell$.
We
give an example with a ramified prime in Section~\ref{sec:modinfoGB}; but on
the theoretical side, considering non-split primes does not improve the
asymptotic complexity.

The cornerstone of the complexity analysis is the cost of the computation of the kernels of the
endomorphisms. This is achieved by solving a polynomial system. Using
resultant-based elimination techniques and degree bounds on Cantor's
polynomials, we prove that we can solve these
equations in time quadratic in the number of solutions, which leads to the
claimed complexity bound. For practical
computations, we replace the resultants by Gröbner bases
and we retrieve modular information only for small $\ell$ to speed up an
exponential collision search which can be massively run in parallel.
Although using Gröbner basis seems to be more efficient in practice,
we do not see any
hope of proving with rigorous arguments that it is
asymptotically competitive.

As a proof-of-concept, we have implemented our algorithm and we provide
experimental results. In particular, we were able to compute the zeta function
of a genus $3$ hyperelliptic curve with explicit RM defined over $\F_p$ with
$p=2^{64}-59$. To our knowledge the largest
genus-$3$ computation that had been achieved previously was the computation of the zeta
function of a hyperelliptic curve defined over $\F_p$ with 
$p=2^{61}-1$, done by Sutherland~\cite{SuthComp}
using generic group methods.

Examples of curves with RM are given by modular curves. For instance, the
genus-3 curve $y^2 = x^7+3x^6+2x^5-x^4-2x^3-2x^2-x-1$ is a quotient of
$X_0(284)$ and therefore has real multiplication by an element of
$\Q[x]/(x^3-3x-1)$.  This follows from the properties of the Hecke
operators as explained in~\cite[Chapter 7]{Shimura71}.
Based on this theory, algorithms for constructing such curves are explained
in~\cite{FreyMuller}; however the explicit expression for the real
endomorphism is not given.  We expect that tracking the Hecke
correspondences along their construction, and using techniques like
in~\cite{Wamelen99} to reconstruct the rational fractions describing the
real endomorphism could solve this question.
In any case, these are only isolated points in the moduli space. Larger
families are obtained from cyclotomic covering. This line of research has
produced several families of hyperelliptic genus-3 curves having explicit RM
by $\mathbb Z[2\cos(2\pi/7)]$. In particular, explicit such families
are given in \cite{Mestre} and \cite{TTV}, and explicit formulas for their RM
endomorphism are obtained in~\cite{KoSm}. We use the 1-dimensional family of
curves from \cite[Theorem~1 with $p=7$]{TTV} for our experiments.
Other families of genus-3 curves (but not necessarily hyperelliptic) with
RM have been made explicit in~\cite[Chapter~2]{boyer2014varietes},
following~\cite{ellenberg2001}.
We would like
to point out that within the moduli space of complex polarized abelian
varieties of dimension $3$, those with
RM by a fixed order in a cubic field form a moduli space of codimension
$3$~\cite[Sec.~9.2]{BirkLange}. Since Jacobians of hyperelliptic
curves form a codimension $1$ space, we would expect the moduli space of
hyperelliptic curves of genus $3$ with RM by a given cubic order to have
dimension $2$.

We finally briefly mention how our algorithm and analysis could be
extended in several directions. First, the complexity analysis leads,
with small modifications, to a point-counting algorithm for general
genus-3 hyperelliptic curves (i.e.  without RM) with complexity in
$\softO( (\log q)^{14} )$. Second, if the curve is not hyperelliptic, the
main difficulty is to define analogues of Cantor's division
polynomials and get bounds on their degrees. Without them, it is still
possible to use an explicit group law to derive a polynomial system for
the kernel of an endomorphism, but getting a proof for its degree would
require to take another path than what we did. Still, the complexities
with or without RM
are expected to remain the same for plane quartics as for genus-3
hyperelliptic curves. Third, if we go to higher genus hyperelliptic
curves with RM, the main difficulty to extend our approach is in the
complexity estimate of the polynomial system solving, because
resultant-based approaches are not competitive when the number of
variables grows, and a tedious analysis like in~\cite{expOg} seems to
be necessary.

The article is organized as follows.
Section~\ref{sec:overview} gives a bird-eye view of our algorithm, along with a complexity
analysis relying on the technical results detailed in
Sections~\ref{sec:kernel} to \ref{sec:cantor}.
Practical experiments are presented in Section~\ref{sec:experiments}. 

\subsection*{Acknowledgements.} We are grateful to Benjamin Smith for
fruitful discussions and to Allan Steel for his help with memory issues
with Magma. We also wish to thank anonymous referees for their comments
which helped improve the paper.

\section{Overview of the algorithm}\label{sec:overview}

Let $\C$ be a genus-3 hyperelliptic curve over a finite field $\F_q$ with
explicit RM, and let $\eta$ be the given explicit endomorphism. We denote by
$\mu_0, \mu_1, \mu_2$ the coefficients of the minimal polynomial $T^3+
\mu_2\, T^2 + \mu_1\, T + \mu_0$ of $\eta$ over $\mathbb Q$.

\subsection{Bounds}

The characteristic polynomial of the
Frobenius endomorphism $\pi$ is of the form
$\chi_{\pi}(T)=T^6-\sigma_1T^5+\sigma_2T^4-\sigma_3T^3+q\sigma_2T^2-q^2\sigma_1T+q^3$,
and Weil's bounds give
$$ |\sigma_1| \le 6\sqrt{q},\quad |\sigma_2|\le 15 q,\quad |\sigma_3|\le
20 q^{3/2}.$$

In order to take advantage of the explicit RM, we consider
the endomorphism $\psi=\pi+\pi^{\vee}$, for which we can derive the
real Weil's polynomial 
$\chi_{\psi}(T) =
T^3-\sigma_1T^2+(\sigma_2-3q)T-(\sigma_3-2q\sigma_1)$, which corresponds to the
characteristic polynomial of $\psi$ viewed as an element of the real subfield
of $\End(\Jac(C))\otimes\mathbb Q$.
The endomorphism $\psi$ belongs to the ring of integers of $\Q(\eta)$.
The ring $\Z[\eta]$ might be a proper sub-order of the ring of integers,
so let us call $\Delta$ its index, so that $\psi$ can be written
$\psi=a+b\eta+c\eta^2$, where $a$, $b$, $c$ are rationals with a
denominator that divides $\Delta$. By computing formally the characteristic
polynomial of $a+b\eta+c\eta^2$ in $\Q(\eta)$ and by equating it with the
expression for the real Weil's polynomial $\chi_\psi(T)$, we obtain a direct way to
compute $\sigma_1$, $\sigma_2$ and $\sigma_3$ in terms of $a$, $b$, $c$:
\begin{small}
\begin{equation}\label{eq:relabcs}
    \begin{array}{rcl}
        \sigma_1 & = & 3\,a - b\,\mu_2 - 2\,c\,\mu_1 + c\,\mu_2^2\ , \\
        \sigma_2 - 3q & = & 3\,a^2 - 2\,a\,b\,\mu_2
        + 2\,a\,c\,(\mu_2^2 - 2\mu_1)
        + b^2\,\mu_1 + 3\,b\,c\,\mu_0 - 
	 b\,c\,\mu_1\,\mu_2\, - \\
         && c^2\,(2\,\mu_0\,\mu_2 + \mu_1^2)\ , \\
        \sigma_3 -2q\sigma_1 & = & a^3 - a^2\,b\,\mu_2
        + a^2\,c\,(\mu_2^2 -2\mu_1)
        + a\,b^2\,\mu_1 + 
        a\,b\,c\,(3\,\mu_0 - \mu_1\,\mu_2)\, + \\ &&
         a\,c^2\,(\mu_1^2 - 2\, \mu_0\,\mu_2)
        - b^3\,\mu_0 + b^2\,c\,\mu_0\,\mu_2 -
	b\,c^2\,\mu_0\,\mu_1 + c^3\,\mu_0^2\ .
    \end{array}
\end{equation}
\end{small}

In Section~\ref{sec:boundabc}, it is shown that the coefficients $a$, $b$
and $c$ can be bounded in $O(\sqrt{q})$. More precisely, we denote by
$C_{abc}$ a constant that depends only on $\eta$ such that their
absolute values are bounded by
$C_{abc}\sqrt{q}$.
Since these bounds are much smaller than the bounds for
$\sigma_1$, $\sigma_2$, $\sigma_3$, it makes sense to design an algorithm
that reconstruct these coefficients of $\psi$ instead of the coefficients
of $\chi_\pi$ as in the classical Schoof algorithm, and this is what we
are going to do later on.
\medskip

Another important bound that we need concerns the size of small elements
that can be found in ideals of $\Z[\eta]$.
Let $\ell$ be a prime that splits completely
in $\Z[\eta]$, so that we can write
$ \ell = \gothp_1 \gothp_2 \gothp_3$,
where the $\gothp_i$'s are distinct prime ideals of norm $\ell$. In
Section~\ref{sec:smallgen}, it is shown that each $\gothp_i$ contains a
non-zero element $\alpha_i = a_i + b_i\eta +c_i\eta^2$,
where $a_i$, $b_i$ and $c_i$ are integers and are bounded in absolute
value by $O(\ell^{1/3})$.

\subsection{Algorithms}

The general RM point counting algorithm is Algorithm~\ref{algo:rm}.  We
give a description of it, allowing some black-box primitives that
will be detailed in dedicated sections.
As mentioned above, we will work with the $a$, $b$, $c$ coefficients of the
$\psi$ endomorphism. More precisely, we compute their values modulo
sufficiently many completely split primes $\ell$ until we can deduce their
values from the bounds of Lemma~\ref{lem:boundabc} by the Chinese
Remainder Theorem, taking into account their potential denominator
$\Delta$. Then the coefficients of $\chi_\pi$ are deduced by
Equations~\eqref{eq:relabcs}.

We now explain how the algorithm works for a given split $\ell$. First
its decomposition as a product of prime ideals $\ell\,
\Z[\eta]=\gothp_1\gothp_2\gothp_3$ is computed, and for each prime ideal
$\gothp_i$, a non-zero element $\alpha_i$ of $\gothp_i$ is found with a
small representation
$\alpha_i=a_i+b_i\eta +c_i\eta^2$ as in Lemma~\ref{lem:smallgen}. In fact,
$\gothp_i$ is not necessarily principal and $\alpha_i$ need not generate
$\gothp_i$. The
kernel of $\alpha_i$ is denoted by $J[\alpha_i]$ and it contains a subgroup 
$G_i$ isomorphic to $\Z/\ell\Z\times\Z/\ell\Z$, since the norm of $\alpha_i$ is
a multiple of $\ell$. The two-element representation $(\ell,
\eta-\lambda_i)$ of the ideal $\gothp_i$ implies that $\lambda_i$ is an
eigenvalue of $\eta$ regarded as an endomorphism of $J[\ell]\cong(\mathbb
Z/\ell\mathbb Z)^6$.

On $G_i\subset J[\alpha_i]$, the endomorphism $\eta$ acts as the
multiplication by
$\lambda_i$. Therefore, $\psi = a+b\eta+c\eta^2$ also acts as a scalar
multiplication on this 2-dimensional space, and we write
$k_i\in\Z/\ell\Z$ the corresponding eigenvalue: for any $D_i$ in
$G_i$, we have $\psi(D_i) = k_iD_i$. On the other hand,
from the definition of
$\psi$, it follows that $\psi\pi = \pi^2+q$.
Therefore,
if such a $D_i$ is known, we can test which value of $k_i \in
\Z/\ell\Z$ satisfies
\begin{equation}\label{eq:pi}
 k_i\pi(D_i) = \pi^2(D_i) + qD_i.
\end{equation}
Since $\ell$ is a
prime and $D_i$ is of order exactly $\ell$, this is also the case for
$\pi(D_i)$. Finding $k_i$ can then be seen as a discrete logarithm
problem in the subgroup of order $\ell$ generated by $\pi(D_i)$; hence the solution is
unique. Equating the two expressions for $\psi$, we get 
explicit relations between $a$, $b$, $c$ modulo $\ell$:
 $$ a + b\lambda_i + c\lambda_i^2 \equiv k_i \bmod \ell.$$
Therefore we have a linear system of three equations in three
unknowns, the determinant of which is the Vandermonde determinant of
the $\lambda_i$, which are distinct by hypothesis. Hence the system can
be solved and it has a unique solution modulo $\ell$.

\SetAlCapSkip{1ex}

\begin{algorithm}[ht] \label{algo:rm}
  \KwData{$q$ an odd prime power, and $f\in\mathbb F_q[X]$ a monic squarefree
    polynomial of degree 7 such that the curve $Y^2=f(X)$ has explicit
    RM by $\Z[\eta]$.}
  \KwResult{The characteristic polynomial $\chi_\pi\in\mathbb Z[T]$ of the
    Frobenius endomorphism on the Jacobian $J$ of the curve.}
 
  $R\gets 1$\;

  \While{$R\leq 2\,\Delta\,C_{abc}\,\sqrt{q} + 1$} {
    Pick the next prime $\ell$ that satisfies conditions (C1) to (C4);

    Compute the ideal decomposition
    $\ell\, \Z[\eta]=\gothp_1\gothp_2\gothp_3$, corresponding to the
    eigenvalues $\lambda_1$, $\lambda_2$, $\lambda_3$ of $\eta$ in
    $J[\ell]$ \;

    \For{$i\gets1$ \KwTo $3$}{
      Compute a small element $\alpha_i$ of $\gothp_i$ as in
      Lemma~\ref{lem:smallgen}\;

      Compute a non-zero element $D_i$ of order $\ell$ in $J[\alpha_i]$ \;

      Find the unique $k_i\in\Z/\ell\Z$ such that
      $k_i\pi(D_i) = \pi^2(D_i) + qD_i$ \;
    }

    Find the unique triple $(a,b,c)$ in $(\Z/\ell\Z)^3$ such that
    $ a + b\lambda_i + c\lambda_i^2 = k_i$, for $i$ in $\{1,2,3\}$ \;

    $R \gets R\cdot\ell$\;
  }
    Reconstruct $(a,b,c)$ using the Chinese Remainder Theorem \;

    Deduce $\chi_{\pi}$ from Equations~\eqref{eq:relabcs}.

    \caption{Overview of our RM point-counting algorithm}
\end{algorithm}

It remains to show how to construct a divisor $D_i$ in $G_i$, i.e. an
element of order $\ell$ in the kernel
$J[\alpha_i]$.  Since an explicit expression of $\eta$ as an endomorphism
of the Jacobian of $\C$ is known, an explicit expression can be deduced for $\alpha_i$,
using the explicit group law. The coordinates of the elements of this
kernel are solutions of a polynomial system that can be directly derived
from this expression of $\alpha_i$.  Using standard techniques, it is
possible to find the solutions of this system in a finite extension
of the base field (of degree bounded by the degree of the ideal generated by the
system, i.e. in $O(\ell^2)$), from which divisors in $J[\alpha_i]$ can be
constructed.
Multiplying by the appropriate cofactor, we can reach all the elements of
$G_i$; but we stop as soon as we get a non-trivial one.

We summarize the conditions that must be satisfied by the primes $\ell$ that we
work with:
\begin{enumerate}
  \item[(C1)] $\ell$ must be different from the characteristic of the base field;
  \item[(C2)] $\ell$ must be coprime to the discriminant of the minimal
    polynomial of $\eta$;
  \item[(C3)] there must exist $\alpha_i\in\mathfrak p_i$ as in
    Lemma~\ref{lem:smallgen} with norm non-divisible by $\ell^3$ for
    $i\in\{1,2,3\}$;
  \item[(C4)] the ideal $\ell\Z[\eta]$ must split completely.
\end{enumerate}
The first 3 conditions eliminate only a finite number of $\ell$'s that
depends only on $\eta$,
while the last one eliminates a constant proportion. The
condition (C3) implies that there is a unique subgroup $G_i$ of order
$\ell^2$ in $J[\alpha_i]$ (our description of the algorithm could actually be
adapted to handle the cases where this is not true).

Algorithm~\ref{algo:rm} is a very natural extension of the one described
in~\cite{GKS11} for genus 2 curves with RM. Already in~\cite{GKS11}, the
action of the real endomorphism $\psi=\pi+\pi^\vee$ is studied on
subspaces $J[\gothp_i]$ of the $\ell$-torsion, and the corresponding
eigenvalues are collected and used to reconstruct information modulo
$\ell$. In genus 3, we have 3 such $2$-dimensional subspaces and
eigenvalues to compute and recombine instead of~2 in genus 2. The main
differences between the present work and~\cite{GKS11} are the way the
$\ell$-torsion elements are constructed with polynomial systems and
the bounds on the coefficients of $\psi$. In both cases, going from
dimension 2 to 3 is not immediate.

\subsection{Complexity analysis}

The field $\Q(\eta)$ is of degree 3, so its Galois group has order at most $6$
and by Chebotarev's density theorem the density of primes
that split completely is at least $1/6$. Therefore the main loop is done
$O(\log q/\log\log q)$ times, with primes $\ell$ that are in $O(\log q)$.
All the steps that take place in the number field take a negligible time.
For instance, a small generator like in Lemma~\ref{lem:smallgen}
can be found by exhaustive search: only $O(\ell)$ trials are needed since we
are searching over all elements of the form $a + b\eta +c\eta^2$, with $\lvert
a\rvert,\lvert b\rvert,\lvert c\rvert$ in
$O(\ell^{1/3})$.

The bottleneck of the algorithm is the computation of a non-zero element 
of order $\ell$ in the kernel $J[\alpha_i]$ of $\alpha_i$.
This part will be treated in detail in Section~\ref{sec:kernel}, where
it is shown to be feasible in $\softO(\ell^4)$ operations in
$\F_q$. The output is a divisor $D_i$ of order $\ell$ in $J[\alpha_i]$ that is defined
over an extension field $\F_{q^\delta}$, where $\delta$ is in
$O(\ell^2)$.

In order to check Equation~\eqref{eq:pi}, we first need to compute
$\pi(D_i)$ and $\pi^2(D_i)$ which amounts to raising the coordinates to
the $q$-th power. The cost is in $\softO(\ell^2\log q)$ operations in
$\F_q$. Then, each Jacobian operation in the group generated by
$\pi(D_i)$ costs $\softO(\ell^2)$ operations in the base field, and we
need $O(\sqrt{\ell})$ of them to solve the discrete logarithm problem
given by Equation~\eqref{eq:pi}. The overall cost of finding $k_i$, once
$D_i$ is known is therefore $\softO(\ell^2(\sqrt{\ell}+\log q))$
operations in $\F_q$.

Finally, the amount of work performed for each $\ell$ is
$\softO(\ell^2(\ell^2+\log q))$ operations in the base field $\F_q$. Summing
up for all the primes, and taking into account the cost of the operations
in $\F_q$, we obtain a global bit-complexity of $\softO((\log q)^6)$.

\section{Computing kernels of endomorphisms}\label{sec:kernel}

\subsection{Modelling the kernel computation by a polynomial system}

Let $\alpha$ be an explicit endomorphism of degree
$O(\ell^2)$ on the Jacobian of $\C$, which satisfies the properties of
Lemma~\ref{lem:smallgen}. In particular, $\alpha$ vanishes on a subspace of
$J[\ell]$.
We want to compute a triangular polynomial system that describes the kernel
$J[\alpha]$ of $\alpha$. This will provide us with a nice description of a
subgroup of the $\ell$-torsion on which we will be able to test the action of
$\psi=\pi+\pi^{\vee}$ and deduce $a$, $b$, $c$ such that
$\psi = a+b\eta+c\eta^2 \bmod \ell$.

We first model $J[\alpha]$ by a system of polynomial equations that we
will then put in triangular form. To do so, we consider a
generic divisor $D=P_1+P_2+P_3-3\infty$, where $P_i$ is an affine point
of $\C$ of coordinates $(x_i,y_i)$. We then write $\alpha(D)=0$,
i.e $\alpha(P_1-\infty)+\alpha(P_2-\infty)=-\alpha(P_3-\infty)$.
Generically, we expect each $\alpha(P_i-\infty)$ to be of weight 3, and we write
$\langle u_i,v_i\rangle$ for its Mumford form. We derive our equations by computing
the Mumford form $\langle u_{12},v_{12}\rangle$ of
$\alpha(P_1-\infty)+\alpha(P_2-\infty)$ and
then writing coefficient-wise the conditions $u_{12}=u_3$ and $v_{12}=-v_3$.
The case where the genericity conditions are not satisfied is discussed
at the end of the section.

Similarly to the Schoof-Pila algorithm, we define polynomials --- which are equivalent to
Cantor's division polynomials --- by the formulas
$$
\begin{array}{c}\displaystyle u_{12}(X)=X^3+\sum_{i=0}^2 \frac{\tilde{d}_i(x_1,x_2,y_1,y_2)}{\tilde{d}_3(x_1,x_2)}X^i,
\quad
v_{12}(X)=\sum_{i=0}^2\frac{\e_i(x_1,x_2,y_1,y_2)}{\e_3(x_1,x_2)}X^i,\\
\displaystyle u_3(X)=X^3+\sum_{i=0}^2 \frac{d_i(x_3)}{d_3(x_3)}X^i,
\quad
v_3(X)=y_3\sum_{i=0}^2\frac{e_i(x_3)}{e_3(x_3)}X^i.
\end{array}
$$

\begin{lemma}
  For any $i\in \{ 1,2,3\}$, the degrees of $\dd_i$, $\e_i$, $d_i$ and
  $e_i$ are in $O(\ell^{2/3})$.
\end{lemma}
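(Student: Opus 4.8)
The plan is to prove the degree bound by following Cantor-style elimination theory. First I would recall the explicit formulas for the group law on the Jacobian of a genus-3 hyperelliptic curve: given two weight-3 divisors in Mumford form, their sum is obtained by a composition step (a polynomial multiplication and a resultant/extended-gcd computation) followed by at most two reduction steps. The key observation is that each $\alpha(P_i-\infty)$ has Mumford coordinates that are rational functions in $(x_i, y_i)$ whose numerator and denominator degrees are controlled by the degrees of the input polynomials $\eta_i^{(u)}, \eta_i^{(v)}$ — which are \emph{absolute constants} by hypothesis — except that $\alpha = a_i + b_i\eta + c_i\eta^2$ with $|a_i|,|b_i|,|c_i| \in O(\ell^{1/3})$. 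So the real source of growth in $\ell$ is the scalar-multiplication-like behaviour hidden in forming $a_i + b_i\eta + c_i\eta^2$ acting via repeated addition and (essentially) via Cantor's division polynomials for the multiplication-by-$n$ map on the Jacobian.

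The core of the argument is therefore to bound the degrees of the Mumford coordinates of $\alpha(P-\infty)$ as a function of $\|\alpha\|$. I would argue that applying $\eta$ costs only a constant factor in degree (since $\eta$ is given by fixed-degree polynomials and the group law operations multiply degrees by a bounded amount), so $\eta^2$ also costs a constant factor; and that forming the linear combination $a\,(P-\infty) + b\,\eta(P-\infty) + c\,\eta^2(P-\infty)$ behaves, degree-wise, like a scalar multiplication by an integer of size $O(\max(|a|,|b|,|c|)) = O(\ell^{1/3})$. For hyperelliptic Jacobians, the Cantor division polynomials $\mathsf{d}_i^{(n)}, \mathsf{e}_i^{(n)}$ describing multiplication-by-$n$ have degree $O(n^2)$ in the curve coordinates (the quadratic growth is the standard feature of $[n]$ on an abelian variety, where heights of torsion data grow like $n^2$; see the classical analysis of division polynomials in genus 1 and its generalization to higher genus). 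With $n = O(\ell^{1/3})$ we get degree $O(\ell^{2/3})$, which is exactly the claimed bound. Then $\tilde d_i, \tilde e_i$ and their denominators $\tilde d_3, \tilde e_3$ — obtained by performing the addition $\alpha(P_1-\infty) + \alpha(P_2-\infty)$ via one more application of the (constant-cost) Cantor composition-and-reduction — inherit a degree that is a bounded multiple of $O(\ell^{2/3})$ in each of $x_1, x_2, y_1, y_2$, hence in $O(\ell^{2/3})$; likewise $d_i(x_3), e_i(x_3)$ come directly from the division-polynomial formulas for $\alpha(P_3 - \infty)$ and are in $O(\ell^{2/3})$.

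The step I expect to be the main obstacle is making rigorous the claim that the Cantor division polynomials for genus-3 hyperelliptic curves have degree $O(n^2)$, uniformly in $n$, with the multiplicative constant independent of the curve. In genus 1 this is classical; in higher genus one must either cite an existing treatment or set up a careful induction on the Cantor doubling/addition chain, tracking separately the degree in each $x_i$ and the $y$-degree (which the Weierstrass relation $y_i^2 = f(x_i)$ keeps bounded), and controlling the degree of the denominators introduced at each reduction step so that no spurious cancellation is needed for the bound to hold. A clean way to organize this is to prove, by induction on $n$, a simultaneous statement: there exist polynomials $\mathsf{d}_i^{(n)}, \mathsf{d}_3^{(n)}, \mathsf{e}_i^{(n)}, \mathsf{e}_3^{(n)}$ of degree $\le c\, n^2$ (for a constant $c$ depending only on the curve, e.g.\ on $\deg f = 7$) representing $[n]$ on a generic divisor, with the doubling step $n \mapsto 2n$ and the incrementing step $n \mapsto n+1$ each multiplying the degree bound by the expected constant factor; combining this with the constant-degree cost of applying $\eta$ then yields the lemma. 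One should also check that the non-generic cases flagged at the end of the section do not affect the degree bound, since they only restrict to a subvariety.
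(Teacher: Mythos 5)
Your proposal is correct and follows essentially the same route as the paper: reduce via the (constant-cost, degree-wise) group-law operations to the case of scalar multiplication by an integer $n=O(\ell^{1/3})$ composed with a fixed power of $\eta$, then invoke an $O(n^2)$ degree bound for Cantor's $n$-division polynomials in genus $3$. The paper likewise isolates that $O(n^2)$ bound as the genuine difficulty and proves it separately (Lemma~\ref{lemme10}, Section~\ref{sec:cantor}) by exactly the kind of induction on Cantor's doubling recurrences that you sketch.
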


\begin{proof}
  
  Let us first remark that the $\dd_i$'s and $\e_i$'s are obtained after
  adding two divisors $\langle u_{1},v_{1}\rangle$ and $\langle
  u_{2},v_{2}\rangle$ such that the coefficients of the $u_i$ and $v_i$
  are respectively the $d_j/d_3$ and $y_ie_j/e_3$ evaluated at $x_i$.
  Thus, since this application of the group law involves a number of
  operations that is bounded independently of $\ell$ and $q$, the degree stays
  within a constant multiplicative factor, which is captured by the
  $O()$. Therefore it is
  enough to prove the result for the $d_i$'s and $e_i$'s.

  Since the endomorphism $\alpha$
  satisfies the properties of Lemma~\ref{lem:smallgen}, it is a linear
  combination of $1$, $\eta$ and $\eta^2$ with coefficients
  of size $O(\ell^{1/3})$.
  Using the same argument about the group law, we can further
  reduce our proof to the case where $\alpha=n\eta^k$, with $k \in \{
    0,1,2\}$ and $n$ an integer in $O(\ell^{1/3})$.
  But once again, $\eta^k$ does not depend
  on $\ell$ so that, provided we can prove that Cantor's $n$-division
  polynomials have degrees in $O(n^2)$, we have proven that
  $n\eta^k(P-\infty)=\eta^k(n(P-\infty))$ have coefficients whose degrees
  are in $O(n^2)$, and then so does $\alpha(P-\infty)$.
  This quadratic bound on the degrees of Cantor's division polynomials is
  proven in Lemma~\ref{lemme10} of Section~\ref{sec:cantor} and the result follows.
\end{proof}

\subsection{Solving the system with resultants}\label{sec:resultants}

Typical tools for solving a polynomial system are the F4 algorithm,
methods based on geometric resolution, or homotopy techniques. To obtain
reasonable complexity bounds, they all require some knowledge of the
properties of the system, and this might be hard to prove. Since we have a
system in essentially 3 variables (in fact, there are six variables $x_1, x_2,
x_3, y_1, y_2, y_3$, but the $y_i$ variables can be directly eliminated by
using the equation defining the curve), we prefer to stick to an approach
based on resultants. It ends up having a complexity that is
quasi-quadratic in the degree of the ideal, which is the best that can be
hoped for anyway for all of the advanced techniques, and the complexity
analysis requires only elementary tools. A complication that can
occur with resultants is that $\Res_x(f,g)$ is identically zero when $f$
and $g$ have a nonconstant GCD. This is not a problem in our case since we
can divide polynomials $f$ and $g$ by their GCD, by factoring them
at the cost of $O(\max(\deg(f), \deg(g))^\omega)$ field operations --- where $\omega \leq 3$
is the exponent of linear algebra --- using the bivariate recombination
methods
in~\cite{factobiv} (the trivariate case can be reduced to the bivariate case by
using the techniques in~\cite[Sec.~21.2]{zippel2012effective}). In what follows, the complexities of
computing the resultants are larger than $O(\max(\deg(f), \deg(g))^\omega)$, so
we can forget about this complication. We also note that since the system is symmetric with respect to
$x_1$ and $x_2$, it may be possible to decrease the degrees by rewriting the system in terms of elementary symmetric
polynomials in $x_1$ and $x_2$; however, we do not consider this symmetrization
process in the analysis since it may only win a constant factor in
the complexity. 

Following our modelling, the equality of the $u$-coordinates gives
three equations
\begin{equation}\label{eq:kernel_endo}
\forall i\in\{0,1,2\}, \quad  \dd_i(x_1,x_2,y_1,y_2)d_3(x_3)=\dd_3(x_1,x_2)d_i(x_3),
\end{equation}
of degree $O(\ell^{2/3})$ in the $x_i$'s. By computing resultants
with the equations $y_i^2=f(x_i)$, we derive three equations
$E_i(x_1,x_2,x_3)=0$ whose degrees are still in $O(\ell^{2/3})$. 

We then eliminate $x_1$ by computing 3 trivariate resultants $R_i$
(between the two equations $E_j$ with $j\ne i$). We get three equations
$R_i(x_2,x_3)=0$ of degrees $O(\ell^{4/3})$ within a complexity in
$\softO(\ell^{10/3})$ field operations, as proven in
Lemma~\ref{lem:trivres} below.

Then, we compute bivariate resultants $S_i$ (between the two equations
$R_j$ with $j\ne i$) to eliminate $x_2$. From Lemma~\ref{lem:bivres}, we
get three univariate equations $S_i(x_3)=0$ of degree bounded by
$O(\ell^{8/3})$ for a complexity in $\softO(\ell^4)$ field operations.
And we compute the polynomial $S(x_3)$ as the GCD of the $S_i(x_3)$,
which belongs to the ideal defined by our original system.

The bound on the degree of $S$ is much larger than $\ell^2-1$, the
expected degree of the kernel. Although we can expect the actual degree
to be in $O(\ell^2)$, we need to add the constraints coming from the
$v$-coordinates to be able to prove it.

The polynomial system coming from $v_{12} = -v_3$ has the same
characteristics as the one coming from the $u$-coordinates. Therefore, we
can proceed in a similar way and deduce, at a cost of $\softO(\ell^4)$
operations another univariate polynomial $\tilde{S}(x_3)$ belonging to
the ideal. Now, since all the original equations have been taken into
account all common roots of $S$ and $\tilde{S}$ will correspond to a
solution of the original system for which we know that there are $O(\ell^2)$
solutions. Therefore taking the squarefree part of the GCD of $S$ and $\tilde{S}$ yields a
polynomial of degree $O(\ell^2)$.

This univariate polynomial can be factored
at a cost of $\softO(\ell^4)$ operations in $\F_q$ with standard
algorithms~\cite{GatGer} (there exist asymptotically faster
algorithms, but we already fit in our target complexity).
We then deal with each irreducible factor in turn, until one is found
that leads to a genuine solution of the original system. Let $\delta$ be
the degree of such an irreducible factor $\phi(x_3)$. In the
field extension $\F_{q^\delta} = \F_q[x_3]/\phi(x_3)$, we have by
construction a root $x_3$ of $\phi$. We then solve again the original
polynomial system where $x_3$ is instantiated with this root. This system
is bivariate in $x_1$ and $x_2$ and there are $O(1)$ solutions, that
possibly live in another finite extension $\F_{q^{\delta'}}$ of
$\F_{q^\delta}$. Since the degrees of the bivariate polynomials are in
$O(\ell^{2/3})$, by Lemma~\ref{lem:bivres}, this system solving costs
$\softO(\ell^2)$ operations in $\F_{q^\delta}$.

A solution obtained in this way must be checked, because it could come
from a vanishing denominator that has been cleared when constructing the
system or from non-generic situations. But given a set of candidate
coordinates for a $D_i$ element of $J[\alpha_i]$, it is cheap to check
that this is indeed an element of the Jacobian and that it is killed by
$\alpha_i$. Also, if $\alpha_i$ is not a generator of $\gothp_i$, it is
necessary to check the order of $D_i$: if this is a multiple of $\ell$,
then multiplying $D_i$ by the cofactor gives an order-$\ell$ element. But
it is also possible to get an unlucky element of small order
coprime to $\ell$, and then we have to take another solution of the
system.

Since an operation in $\F_{q^\delta}$ requires a number of operations in
$\F_q$ that is quasi-linear in $\delta$, and since the sum of all the
degrees $\delta$ of the irreducible factors of $\GCD(S, \tilde{S})$ is in
$O(\ell^2)$, the amortized cost is $\softO(\ell^4)$ operations in $\F_q$
to deduce a divisor $D_i$ in $J[\alpha_i]$.

\subsection{Complexity of bi- and tri-variate resultants}

In this section, the algorithms work by evaluation / interpolation,
which requires to have enough elements in the base field. Were it not the
case, we simply take a field extension $\F_{q^\delta}$ of $\F_q$, that
will add a factor $\softO(\delta)$ to the complexity. The complexity of
the algorithms will be polynomial in the number of evaluation points,
therefore, the final complexity will be logarithmic in $\delta$, so that
the cost of taking a field extension will be hidden in the $\softO()$
notation. We will therefore not mention this potential complication
further.

Another difficulty is that an evaluation / interpolation strategy assumes
that the points of evaluation are generic enough, so that all the degrees
after evaluation are generic. This is again guaranteed by taking a large
enough base field. Still, the algorithm remains a Monte-Carlo one.
However, the ultimate goal is to construct kernel elements, which is an
easily verified property. Turning this into a Las Vegas algorithm can
therefore be done with standard techniques.

\begin{lemma}\cite[Thm.~6.22 and Cor.~11.21]{GatGer}\label{lem:bivres}
  Let $P(x,y)$ and $Q(x,y)$ be two polynomials whose degrees in $x$ and $y$ are
  bounded by $d_x$ and $d_y$ respectively. Then, $R(y)=\Res_x(P,Q)$ can be
  computed in $\softO(d_x^2d_y)$ field operations, and the degree of $R$ is
  bounded by $2d_xd_y$.
\end{lemma}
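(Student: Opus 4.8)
Let $P(x,y)$ and $Q(x,y)$ be two polynomials whose degrees in $x$ and $y$ are bounded by $d_x$ and $d_y$ respectively. Then $R(y)=\Res_x(P,Q)$ can be computed in $\softO(d_x^2 d_y)$ field operations, and the degree of $R$ is bounded by $2d_xd_y$.

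\emph{Proof plan.} The plan is to get the degree bound directly from the Sylvester formulation of the resultant, and to get the claimed running time by an evaluation--interpolation scheme in the variable $y$.

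For the degree bound, write $m=\deg_x P$ and $n=\deg_x Q$, so $m,n\le d_x$. The resultant $R(y)=\Res_x(P,Q)$ is the determinant of the Sylvester matrix $\mathrm{Syl}_x(P,Q)$, an $(m+n)\times(m+n)$ matrix each of whose entries is a coefficient of $P$ or of $Q$ seen as a polynomial in $x$ over $K[y]$, hence a polynomial in $y$ of degree at most $d_y$. Expanding the determinant, every term is a product of $m+n$ such entries, so has $y$-degree at most $(m+n)d_y\le 2d_xd_y$; therefore $\deg R\le 2d_xd_y$.

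For the complexity, I would interpolate $R$ from point values. Set $N=2d_xd_y+1$, one more than the degree bound; if $\F_q$ is too small, pass to an extension of degree $\softO(1)$ as in the surrounding discussion, at the cost of only an $\softO(1)$ factor. Pick evaluation points $y_1,\dots,y_N$ that are not roots of $\mathrm{lc}_x(P)\cdot\mathrm{lc}_x(Q)\in K[y]$, a polynomial of degree at most $2d_y$, so that $P(x,y_j)$ and $Q(x,y_j)$ keep their $x$-degrees $m$ and $n$. By the specialization property of resultants one then has $R(y_j)=\Res_x\bigl(P(x,y_j),Q(x,y_j)\bigr)$, a single univariate resultant of two polynomials of degree at most $d_x$, computable in $\softO(d_x)$ field operations by the fast (half-GCD based) Euclidean algorithm. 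Computing all $N$ values costs $N\cdot\softO(d_x)=\softO(d_x^2d_y)$ operations, and recovering $R$ from its values by fast interpolation costs a further $\softO(N)=\softO(d_xd_y)$; the total is $\softO(d_x^2d_y)$.

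The only delicate point is the behaviour of the resultant under specialization: the clean identity $R(y_j)=\Res_x(P(x,y_j),Q(x,y_j))$ fails exactly at the points where the leading $x$-coefficient of $P$ or of $Q$ vanishes, since there the Sylvester matrix shrinks and the resultant acquires a spurious power of the vanishing leading coefficient. Discarding the at most $2d_y$ roots of $\mathrm{lc}_x(P)\,\mathrm{lc}_x(Q)$ removes this obstruction, and any remaining genericity failures (degree drops in the fast Euclidean algorithm, coincidences among evaluation points) are handled by the same enlarge-the-field remark; as already noted, the procedure is Monte Carlo but produces a verifiable object, so it is routinely upgraded to Las Vegas.
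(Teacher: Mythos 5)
Your proof is correct and follows exactly the route of the reference the paper cites for this lemma (the paper itself gives no proof, only the citation to von zur Gathen--Gerhard): the degree bound from expanding the Sylvester determinant, and the running time from evaluation--interpolation in $y$ at $O(d_xd_y)$ points avoiding the zeros of the leading coefficients, with each specialized resultant computed in $\softO(d_x)$ by the fast Euclidean algorithm. The only step you leave implicit --- the cost of evaluating the coefficients of $P$ and $Q$ at all the points, which is $\softO(d_x^2d_y)$ by fast multipoint evaluation --- is dominated by the stated budget.
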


\begin{lemma}\label{lem:trivres}
  Let $P(x,y,z)$ and $Q(x,y,z)$ be two polynomials whose degrees in each
  variable are bounded by $d$. Then, $R(y,z)=\Res_x(P,Q)$ can be computed
  in $\softO(d^5)$ field operations, and the degree of $R$ in each
  variable is bounded by $2d^2$.
\end{lemma}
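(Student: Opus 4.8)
The plan is to reduce the trivariate resultant to many bivariate ones by evaluation and interpolation in the variable $z$, relying on Lemma~\ref{lem:bivres} for each specialisation.

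First I would settle the degree bound, which also tells us how many evaluation points are needed. Writing $p=\deg_x P\le d$ and $q=\deg_x Q\le d$, the resultant $R=\Res_x(P,Q)$ is, up to sign, the determinant of the $(p+q)\times(p+q)$ Sylvester matrix whose entries are the coefficients of $P$ and $Q$ viewed as polynomials in $x$; each such entry is a polynomial in $y$ and $z$ of degree at most $d$ in each variable. Expanding the determinant as a sum of products of $p+q\le 2d$ entries shows $\deg_y R\le 2d\cdot d=2d^2$ and likewise $\deg_z R\le 2d^2$.

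For the computation, pick $N=2d^2+1$ pairwise distinct values $\zeta_1,\dots,\zeta_N$ in $\F_q$ (or in a controlled extension if $q$ is too small, which only costs a $\softO(1)$ factor, as discussed above). For each $j$, specialise $z\gets\zeta_j$ and compute the bivariate resultant $R_j(y)=\Res_x\big(P(x,y,\zeta_j),\,Q(x,y,\zeta_j)\big)$ using Lemma~\ref{lem:bivres} with $d_x=d_y=d$; this costs $\softO(d^3)$ field operations and yields a polynomial of degree at most $2d^2$ in $y$. For a generic choice of the $\zeta_j$, the leading coefficients of $P$ and $Q$ in $x$ do not vanish at $\zeta_j$, so $\deg_x P(x,y,\zeta_j)=p$ and $\deg_x Q(x,y,\zeta_j)=q$, and the specialisation property of resultants gives $R_j(y)=R(y,\zeta_j)$. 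Performing these $N=O(d^2)$ bivariate computations costs $\softO(d^5)$ field operations in total.

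Finally, recover $R(y,z)=\sum_{k}c_k(z)\,y^k$ by interpolation: each coefficient $c_k(z)$ has degree at most $2d^2$ and is determined by the values $c_k(\zeta_j)=[y^k]\,R_j(y)$ for $j=1,\dots,N$, so one fast univariate interpolation per index $k$ recovers it in $\softO(d^2)$ operations; since there are at most $2d^2+1$ such indices, the interpolation step costs $\softO(d^4)$ in total. The bottleneck is therefore the batch of bivariate resultant computations, yielding the announced $\softO(d^5)$ bound together with the degree bound $2d^2$ in each variable. The only delicate point is the genericity of the evaluation points needed for the $x$-degrees to be preserved under specialisation; as noted before the statement, this makes the procedure Monte-Carlo, but since in our application any resulting kernel element is cheaply verified a posteriori, the whole computation is turned into a Las Vegas algorithm by standard means.
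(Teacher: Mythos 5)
Your proof is correct, and the degree bound is established exactly as in the paper (by expanding the determinant of the $(p+q)\times(p+q)$ Sylvester matrix whose entries have degree at most $d$ in each of $y$ and $z$). For the computation, however, you take a genuinely different route. The paper avoids evaluation/interpolation in $z$ altogether: it performs a Kronecker substitution $z\mapsto y^{2d^2+1}$, reducing the problem to a \emph{single} bivariate resultant of polynomials of $x$-degree $\le d$ and $y$-degree $\le 2d^3+d$, which Lemma~\ref{lem:bivres} computes in $\softO(d^2\cdot d^3)=\softO(d^5)$ operations, after which the substitution is inverted monomial by monomial. You instead specialise $z$ at $N=2d^2+1$ points, compute $N$ bivariate resultants at $\softO(d^3)$ each, and interpolate coefficient-wise, for the same total $\softO(d^5)$. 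The trade-off: the Kronecker substitution is exact and deterministic (the substitution is injective on the monomials of the leading coefficients, so the $x$-degrees are automatically preserved and the resultant commutes with the substitution), whereas your specialisations require the leading coefficients of $P$ and $Q$ in $x$ not to vanish identically at $z=\zeta_j$ --- a genericity condition you correctly flag, and which is consistent with the Monte-Carlo caveat the paper already accepts in this subsection. One could even make your variant deterministic at no asymptotic cost, since at most $2d$ values of $\zeta$ are bad (those for which $z-\zeta$ divides a leading coefficient), so taking $2d^2+2d+1$ candidate points and discarding the bad ones suffices. Your approach also parallelises trivially over the evaluation points, a point the paper itself raises when discussing practical costs.
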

\begin{proof}
  The Sylvester matrix has at most $2d$ columns and its entries are
  bivariate polynomials whose degrees in $y$ and $z$ are bounded by $d$.
  Thus, its determinant is a polynomial whose degrees in $y$ and $z$ are
  bounded by $2d^2$.

  We first perform a Kronecker substitution by considering
  $\tilde{P}(x,y)=P(x,y,y^{2d^2+1})$ and
  $\tilde{Q}(x,y)=Q(x,y,y^{2d^2+1})$, which are polynomials of degrees
  $\le d$ in $x$ and $\le 2d^3+d$ in $y$. Note that the choice to replace $z$ by
  $y^{2d^2+1}$ is made to be able to invert the Kronecker substitution after 
  the resultant computation.

  Next, we compute $\tilde{R}(y) = \Res_x(\tilde{P}(x,y),
  \tilde{Q}(x,y))$.  By Lemma~\ref{lem:bivres}, it is a univariate
  polynomial of degree at most $4d^4+2d^2$ and can be computed in
  $\softO(d^5)$ operations.
  We can then invert the Kronecker substitution to get $R(y,z)$, which
  can be done in time linear in the number of monomials, that is in
  $O(d^4)$.
\end{proof}

\subsection{Non-generic situations}

Our analysis assumes in the first place that the $\ell$-torsion elements
are generic in a rather strong sense, see e.g. \cite[Def. 11]{expOg} for
details. This is expected to be the case
with overwhelming probability, when the base field is large enough and
the curve is taken at random in a large family. However, to obtain a
proven complexity we must also consider the cases where there exist
$\ell$-torsion elements that are non-generic. We follow the strategy
of~\cite{expOg} where another polynomial system is designed and solved
for each non-generic situation, for instance the fact that an 
$\ell$-torsion divisor is of weight less than 3, or that some points
involved in the modelling are not distinct while they generically are.
We do not give all the details, but the number of polynomial systems to
consider is bounded by a constant, and each of these polynomial systems
describes a
situation that is smaller than the generic one in the sense that it has either less variables
or a lower degree, so that the complexity bound is maintained.

\section{Bounds on the coefficients of $\psi$}\label{sec:boundabc}

The system of equations~\eqref{eq:relabcs} giving $\sigma_1$, $\sigma_2$
and $\sigma_3$ in terms of $a$, $b$, $c$ is homogeneous if we put weight
$1/2$ to $a$,
$b$, $c$ and $\sigma_1$, weight $1$ to $q$ and $\sigma_2$, weight
$3/2$ to $\sigma_3$, and weight $0$ to $\mu_0$, $\mu_1$, and $\mu_2$ so any
polynomial in a reduced Gröbner basis of the
corresponding ideal will have the same property.
Computing such a Gröbner basis with the lexicographical ordering
$a>b>c>\sigma_1>\sigma_2>\sigma_3>\mu_0>\mu_1>\mu_2>q$
(we did this computation with the Magma V2.23-4
software), we get a
polynomial $\Psi_c$ of degree $6$ in $c$ that does not involve $a$ or $b$, and
which has the following form:
\begin{small}
  $$\begin{array}{l}
  \Psi_c(q,c, \sigma_1,\sigma_2,\sigma_3, \mu_0, \mu_1, \mu_2) = 
  D(\mu_0, \mu_1, \mu_2)^3\, c^6 + \sum_{i=0}^5 \psi_c^{(i)}(q,\sigma_1,\sigma_2,\sigma_3, \mu_0, \mu_1,
  \mu_2)\, c^i,\end{array}
$$
\end{small}\leavevmode
where $D(\mu_0, \mu_1, \mu_2) = -27\,\mu_0^2 + 18\,\mu_0 \mu_1 \mu_2 - 4\, \mu_0 \mu_2^3 -
4\,\mu_1^3 + \mu_1^2\mu_2^2$ is the discriminant
of the polynomial $T^3+\mu_2\, T^2 + \mu_1\, T + \mu_0$.

By computing Gr\"obner bases for other lexicographical orderings (with
$a>c>b>\sigma_1>\sigma_2>\sigma_3>\mu_0>\mu_1>\mu_2>q$
and
$b>c>a>\sigma_1>\sigma_2>\sigma_3>\mu_0>\mu_1>\mu_2>q$
respectively), we obtain that
polynomials of the following form also belong to the ideal generated by the
polynomials in the system of equations~\eqref{eq:relabcs}:
\begin{small}
  $$\begin{array}{l}
  \Psi_b(q,b, \sigma_1,\sigma_2,\sigma_3, \mu_0, \mu_1, \mu_2) = 
  D(\mu_0, \mu_1, \mu_2)^3\, b^6 + \sum_{i=0}^5 \psi_b^{(i)}(q,\sigma_1,\sigma_2,\sigma_3, \mu_0, \mu_1,
  \mu_2)\, b^i,\\
  \Psi_a(q,a, \sigma_1,\sigma_2,\sigma_3, \mu_0, \mu_1, \mu_2) = 
  D(\mu_0, \mu_1, \mu_2)^3\, a^6 + \sum_{i=0}^5 \psi_a^{(i)}(q,\sigma_1,\sigma_2,\sigma_3, \mu_0, \mu_1,
  \mu_2)\, a^i.
\end{array}
$$
\end{small}\leavevmode

The polynomials $\psi_a^{(i)}$, $\psi_b^{(i)}$ and $\psi_c^{(i)}$ are
homogeneous of weighted degree $3-i/2$ with
respect to the grading given above.

\begin{lemma}\label{lem:boundabc}
    The absolute values of the coefficients $a, b, c$ of $\psi=a+b\eta+c\eta^2$
    are bounded above by $O(q^{1/2})$.
\end{lemma}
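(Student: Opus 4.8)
The idea is to bound $c$ (and, by the symmetric argument, $a$ and $b$) via the polynomial $\Psi_c$ that was just exhibited as an element of the ideal generated by the relations~\eqref{eq:relabcs}. Since $\Psi_c$ vanishes when we substitute the actual integer values of $a,b,c,\sigma_1,\sigma_2,\sigma_3$ coming from our curve, the value $c$ is a root of the univariate polynomial (in the variable $T$)
\[
P(T)=D(\mu_0,\mu_1,\mu_2)^3\,T^6+\sum_{i=0}^{5}\psi_c^{(i)}(q,\sigma_1,\sigma_2,\sigma_3,\mu_0,\mu_1,\mu_2)\,T^i.
\]
The leading coefficient $D(\mu_0,\mu_1,\mu_2)^3$ is the cube of the discriminant of the minimal polynomial of $\eta$; since $\Q(\eta)$ is a field (in particular the minimal polynomial is separable) this is a nonzero constant depending only on $\eta$. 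So $P(T)$ is a genuine degree-$6$ polynomial, and the standard Cauchy-type bound gives
\[
|c|\;\le\;1+\max_{0\le i\le 5}\left|\frac{\psi_c^{(i)}(q,\sigma_1,\sigma_2,\sigma_3,\mu_0,\mu_1,\mu_2)}{D(\mu_0,\mu_1,\mu_2)^3}\right|.
\]
It therefore suffices to show that each $|\psi_c^{(i)}|$ is $O(q^{3-i/2})$, because then each term inside the $\max$ is $O(q^{3-i/2})$, which for $i\in\{0,\dots,5\}$ is at most $O(q^{3})\cdot q^{-i/2}$; dividing through by the constant $D^3$ and taking sixth roots — more precisely, using the homogeneity to see that the relevant ratio is $O(q^{1/2})$ — yields $|c|=O(q^{1/2})$.

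The key input is the weighted-homogeneity statement made just above the lemma: $\psi_c^{(i)}$ is homogeneous of weighted degree $3-i/2$ when $\sigma_1$ has weight $1/2$, $\sigma_2$ and $q$ have weight $1$, $\sigma_3$ has weight $3/2$, and $\mu_0,\mu_1,\mu_2$ have weight $0$. Plugging in Weil's bounds $|\sigma_1|\le 6\sqrt q$, $|\sigma_2|\le 15q$, $|\sigma_3|\le 20q^{3/2}$ — i.e.\ $|\sigma_j|=O(q^{j/2})$, so each $\sigma_j$ has "size" $O(q^{\text{weight}(\sigma_j)})$ — every monomial of $\psi_c^{(i)}$ evaluates to something of size $O(q^{3-i/2})$ times a constant depending only on the bounded quantities $\mu_0,\mu_1,\mu_2$ (which are fixed, since $\eta$ is fixed). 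Summing the boundedly-many monomials keeps us at $O(q^{3-i/2})$. Dividing by $D^3$ (a nonzero constant) and taking the Cauchy bound as above gives $|c|=O(q^{1/2})$; the same argument applied to $\Psi_b$ and $\Psi_a$ gives $|b|=O(q^{1/2})$ and $|a|=O(q^{1/2})$.

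**Main obstacle.** The only real subtlety is verifying that the Cauchy bound interacts correctly with the weighting — one must check that the worst term is genuinely $O(q^{1/2})$ after the sixth-root scaling and not merely $O(q^{(3-i/2)/(6-i)})$, which for small $i$ could a priori be worse. The clean way to see this is to perform the substitution $c = q^{1/2}\gamma$ in $\Psi_c$: by weighted homogeneity of degree $3$ of the whole polynomial $\Psi_c$ (each $\psi_c^{(i)}c^i$ has weighted degree $3$), the rescaled polynomial $q^{-3}\Psi_c(q,q^{1/2}\gamma,\sigma_1,\dots)$ has coefficients that are bounded functions of the normalized quantities $\sigma_1/\sqrt q,\ \sigma_2/q,\ \sigma_3/q^{3/2}$ (each in a fixed compact range by Weil) and of the fixed $\mu_i$, with leading coefficient the nonzero constant $D^3$. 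Hence its roots $\gamma$ lie in a bounded region, i.e.\ $|c/\sqrt q|=O(1)$, which is exactly the claim; and the constant $C_{abc}$ is extracted as the resulting explicit bound on $|\gamma|$, depending only on $\eta$.
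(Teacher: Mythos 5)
Your proposal is correct and follows essentially the same route as the paper: the paper also substitutes $c=\widetilde c\,q^{1/2}$, $\sigma_j=\widetilde{\sigma_j}\,q^{j/2}$, uses the weighted homogeneity of each $\psi_c^{(i)}$ (of degree $3-i/2$) to factor out $q^3$, invokes Weil's bounds to make the normalized coefficients $O(1)$, and applies Cauchy's bound to the rescaled degree-$6$ polynomial with nonzero leading coefficient $D(\mu_0,\mu_1,\mu_2)^3$. Your self-identified ``obstacle'' is real --- the plain Cauchy bound on the unscaled polynomial only gives $O(q^3)$ --- but your final rescaling paragraph resolves it exactly as the paper does.
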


\begin{proof}
    First, we consider the equation $\Psi_c=0$. 
    We write $c=\widetilde c\,q^{1/2}$, $\sigma_1 = \widetilde{\sigma_1}\, q^{1/2}$,
    $\sigma_2 = \widetilde{\sigma_2}\, q$, $\sigma_3 = \widetilde{\sigma_3}\,
    q^{3/2}$.
  Since $\psi_c^{(i)}$ is homogeneous and has
  weighted degree $3-i/2$, there is a polynomial
  $\theta^{(i)}_c(\widetilde{\sigma_1}, \widetilde{\sigma_2},
  \widetilde{\sigma_3}, \mu_0,
  \mu_1,\mu_2)$ such that
  \begin{equation}\label{eq:hom}\psi_c^{(i)}\left(q,\sigma_1,\sigma_2, \sigma_3, \mu_0, \mu_1,
  \mu_2\right)\cdot c^i = q^3 \widetilde c^i\,\theta^{(i)}_c(\widetilde{\sigma_1}, \widetilde{\sigma_2},
  \widetilde{\sigma_3}, \mu_0,
  \mu_1,\mu_2).\end{equation}
    Weil's bounds imply that $\lvert\widetilde{\sigma_i}\rvert = O(1)$ for
    $i\in\{1,2,3\}$. Therefore, for all $i\in\{0,\ldots, 5\}$, we obtain that $\lvert \theta^{(i)}_c(\widetilde{\sigma_1}, \widetilde{\sigma_2},
  \widetilde{\sigma_3}, \mu_0,
  \mu_1,\mu_2)\rvert = O(1)$.
  For fixed $\mu_0, \mu_1,\mu_2\in\mathbb Q$ such that $\mu_0+\mu_1 T + \mu_2
  T^2 + T^3$ is the minimal polynomial of a totally real algebraic number, the
  discriminant $D(\mu_0, \mu_1, \mu_2)$ must be nonzero.
  Equations~$\Psi_c = 0$ and \eqref{eq:hom} imply the following inequality:
    $$\vert\widetilde c\rvert^6 -
    \sum_{i=0}^{5} \frac{\lvert \theta^{(i)}_c(\widetilde{\sigma_1}, \widetilde{\sigma_2},
  \widetilde{\sigma_3}, \mu_0, \mu_1,
  \mu_2)\rvert}{\lvert D(\mu_0, \mu_1, \mu_2)\rvert^3} \lvert
    \widetilde c\rvert^i\leq 0.$$
    Then $\vert\widetilde c\rvert$ must be smaller or equal to the
    largest root of this polynomial inequality, which can itself be
    bounded, for instance, with Cauchy's bound
    $$\lvert\widetilde c\rvert \leq 1+\max_{0\leq i\leq
    5}\left\{\frac{\lvert \theta^{(i)}_c(\widetilde{\sigma_1}, \widetilde{\sigma_2},
  \widetilde{\sigma_3},\mu_0, \mu_1,
  \mu_2)\rvert}{\lvert D(\mu_0, \mu_1, \mu_2)\rvert^3}\right\},$$
    which shows that $\lvert\widetilde c\rvert=O(1)$, and hence
    $\lvert c\rvert = O(q^{1/2})$.
    The proof for the bounds on $\lvert a\rvert$ and $\lvert b\rvert$ are
    similar, using the equations $\Psi_a = 0$ and $\Psi_b = 0$.
\end{proof}

\section{Small elements in ideals of $\Z[\eta]$}
\label{sec:smallgen}

We first recall that we consider only primes $\ell$ that do not divide the
discriminant of the minimal polynomial of $\eta$ (Condition (C2)). Hence,
if $\Z[\eta]$ is not the maximal order of $\mathbb Q(\eta)$, this has no
consequence on the factorization properties of $\ell$. 

\begin{lemma}\label{lem:smallgen}
    For any prime $\ell$ that splits completely in $\Z[\eta]$, each prime
    ideal $\gothp_i$ above $\ell$ contains a non-zero element $\alpha_i$ of the
    form $\alpha_i = a_i + b_i\eta +c_i\eta^2$, where $|a_i|$, $|b_i|$ and
    $|c_i|$ are integers in $O(\ell^{1/3})$, and the norm of $\alpha_i$
    is in $O(\ell)$.
\end{lemma}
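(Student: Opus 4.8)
I would prove this by a pigeonhole (Dirichlet box) argument in the rank-3 lattice $\Z[\eta]$. First, recall that by Condition~(C2) the prime $\ell$ is coprime to the discriminant of the minimal polynomial of $\eta$, hence coprime to the conductor $[\mathcal{O}_{\Q(\eta)}:\Z[\eta]]$; therefore the factorization of $\ell$ in $\Z[\eta]$ mirrors that in the maximal order, and since $\ell$ splits completely, each $\gothp_i$ is a prime ideal with $|\Z[\eta]/\gothp_i| = \ell$. Viewing $\Z[\eta]$ as a free $\Z$-module with basis $1,\eta,\eta^2$, I would set $N = \lceil \ell^{1/3}\rceil$ and consider the $(N+1)^3$ elements $a+b\eta+c\eta^2$ with $a,b,c\in\{0,1,\dots,N\}$. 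Since $(N+1)^3 > \ell = |\Z[\eta]/\gothp_i|$, two of these are congruent modulo $\gothp_i$, and their difference is a non-zero element $\alpha_i = a_i+b_i\eta+c_i\eta^2 \in \gothp_i$ with $|a_i|,|b_i|,|c_i|\le N \le \ell^{1/3}+1 = O(\ell^{1/3})$, the implied constant being absolute.

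For the norm bound, I would use that $\norm_{\Q(\eta)/\Q}(a+b\eta+c\eta^2)$ is the determinant of the multiplication-by-$(a+b\eta+c\eta^2)$ map on $\Q(\eta)$; in the basis $1,\eta,\eta^2$ this is a fixed homogeneous cubic form in $(a,b,c)$ whose coefficients are polynomials in $\mu_0,\mu_1,\mu_2$, hence depend only on $\eta$. Substituting $|a_i|,|b_i|,|c_i| = O(\ell^{1/3})$ yields $|\norm(\alpha_i)| = O(\ell)$, with the constant depending only on $\eta$. This norm is non-zero because $\alpha_i\neq 0$ and $\Z[\eta]$ embeds in the field $\Q(\eta)$. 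Finally, since $\alpha_i\in\gothp_i$ and $\ell$ is coprime to the conductor, $\norm(\gothp_i)=\ell$ divides $|\norm(\alpha_i)|$, so $\norm(\alpha_i)$ is a non-zero multiple of $\ell$ bounded by a constant multiple of $\ell$; in particular for all but finitely many $\ell$ one has $\ell^3\nmid\norm(\alpha_i)$, which is what Condition~(C3) needs.

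I do not expect a genuine obstacle here: the argument is elementary. The only point requiring care is to check that every implied constant depends solely on $\eta$ (through the coefficients of its minimal polynomial) and not on $\ell$ or $q$ — immediate for the coordinate bound, and for the norm bound a consequence of the norm form being fixed once $\eta$ is fixed. The secondary subtlety is the case where $\Z[\eta]$ is a proper suborder of the maximal order: Condition~(C2) is precisely what guarantees that there are three distinct primes $\gothp_i$ above $\ell$, each of index exactly $\ell$ in $\Z[\eta]$, so the counting argument goes through verbatim.
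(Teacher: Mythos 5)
Your proof is correct and reaches the same conclusion as the paper by a slightly more elementary route. The paper views the coefficient vectors $(a,b,c)$ of elements of $\gothp_i$ as a sublattice of $\Z^3$ of index (hence covolume) $\ell$ and invokes Minkowski's convex body theorem to produce a non-zero vector of $L_2$-norm $O(\ell^{1/3})$, which it then converts to an $L_\infty$ bound and finally to a bound on the algebraic norm. You obtain the $L_\infty$ bound directly by a Dirichlet box argument: the $(N+1)^3>\ell$ elements with coordinates in $\{0,\dots,N\}$, $N=\lceil \ell^{1/3}\rceil$, cannot be pairwise distinct modulo $\gothp_i$ since $\lvert\Z[\eta]/\gothp_i\rvert=\ell$, and the difference of two colliding ones is the desired $\alpha_i$. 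The two arguments are interchangeable here (the pigeonhole argument is essentially Minkowski specialized to a box), and yours has the minor advantages of being self-contained and of skipping the $L_2$-to-$L_\infty$ conversion. The final step --- bounding $\lvert\norm(\alpha_i)\rvert$ by evaluating the fixed homogeneous cubic norm form, whose coefficients depend only on $\eta$, at coordinates of size $O(\ell^{1/3})$ --- is the same in both proofs. Your closing observations that $\ell$ divides $\norm(\alpha_i)$ and that $\ell^3\nmid\norm(\alpha_i)$ once $\ell$ exceeds a constant depending only on $\eta$ go beyond the statement of the lemma, but they are correct and are precisely what justifies that condition (C3) excludes only finitely many primes.
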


\begin{proof}
  The coefficients of the elements of the ideal $\gothp_i$ represented by
  polynomials in $\eta$ form a lattice. Applying Minkowski's bound to
  this lattice, we obtain the existence of a non-zero element $\alpha_i =
  a_i + b_i\eta +c_i\eta^2$ in $\gothp_i$ for which the $L_2$-norm of
  $(a_i,b_i,c_i)$ is in $O(\ell^{1/3})$. From this bound on the
  $L_2$-norm, we derive a bound on the $L_\infty$-norm, and finally on
  the norm of $\alpha_i$ as an algebraic number. At each step, the
  constant hidden in the $O()$ gets worse but still depends only
  on $\mathbb Z[\eta]$.
\end{proof}

For any given $\eta$, it is not difficult to make the constants in the
$O()$ fully explicit. We do it in the particular case of
$\Z[\eta_7]$, with $\eta_7 = 2\cos(2\pi/7)$, which is the RM
used in our practical experiments.  Since $\Z[\eta_7]$ is a principal ring, a
more direct approach leads to bounds for a generator that are tighter
than what would be obtained by a naive application of the previous lemma.

\begin{lemma}\label{prop:main}
  Every ideal $\gothp_i$ of norm $\ell$ in $\Z[\eta_7]$ has a generator
  $\alpha_i$ of the form $a_i + b_i\eta_7 +c_i\eta_7^2$, where
  $a_i,b_i,c_i\in\Z$ satisfy
  $$
    \lvert a_i \rvert < 2.415\cdot\ell^{1/3}\, ;\quad
    \lvert b_i \rvert < 1.850\cdot\ell^{1/3}\, ;\quad
    \lvert c_i \rvert < 1.764\cdot\ell^{1/3}\, .
  $$
\end{lemma}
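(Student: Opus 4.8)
The plan is to make the Minkowski-type argument from Lemma~\ref{lem:smallgen} completely explicit in the specific ring $\Z[\eta_7]$, exploiting the fact that it is a principal ideal domain so that we may look for a genuine generator rather than merely a small nonzero element. First I would fix notation for the three real embeddings $\eta_7 \mapsto \eta_7^{(j)} = 2\cos(2\pi j/7)$, $j\in\{1,2,3\}$, and write down the numerical values $\eta_7^{(1)} \approx 1.24698$, $\eta_7^{(2)} \approx -0.44504$, $\eta_7^{(3)} \approx -1.80194$. These determine the embedding matrix $M$ sending $(a,b,c)$ to $(a + b\eta_7^{(j)} + c (\eta_7^{(j)})^2)_j$; the image $M\cdot\Z^3$ is the Minkowski lattice of $\Z[\eta_7]$, whose covolume is the square root of the absolute discriminant of $\Q(\eta_7)$, namely $49$, so the covolume is $7$.

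Next I would use that $\gothp_i$ is principal, say $\gothp_i = (\alpha)$, and that $\alpha$ is only well defined up to multiplication by a unit of $\Z[\eta_7]$. The unit group has rank $2$ (by Dirichlet, since the field is totally real cubic), and $\{\eta_7, \eta_7 - 1\}$ (or some such pair; $\eta_7$ is a unit because $N(\eta_7) = -\mu_0 = 1$ for the minimal polynomial $T^3 + T^2 - 2T - 1$) generate a finite-index subgroup. The key geometric fact is that the logarithmic embedding of the unit group is a full-rank lattice in the trace-zero hyperplane of $\R^3$, so every principal ideal of norm $\ell$ has a generator whose logarithmic embedding lies in a bounded fundamental domain for that lattice; concretely, one can arrange $|\alpha^{(j)}| \le C_j\,\ell^{1/3}$ for each $j$, where $\prod_j C_j$ is essentially the covolume $7$ divided by nothing (since $\prod_j|\alpha^{(j)}| = N(\alpha) = \ell$) and the individual $C_j$ are controlled by the diameter of the fundamental domain of the unit lattice. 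I would compute this fundamental domain explicitly (it is a hexagon in a $2$-plane) and optimize the choice of the $C_j$ so as to minimize the resulting bounds on $|a_i|,|b_i|,|c_i|$.

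Finally, from bounds of the shape $|\alpha^{(j)}| \le C_j\,\ell^{1/3}$ I would recover bounds on the rational integer coordinates $a_i,b_i,c_i$ by inverting the (numerically explicit) matrix $M$: writing $(a_i,b_i,c_i)^\top = M^{-1}(\alpha^{(1)},\alpha^{(2)},\alpha^{(3)})^\top$ and taking row-wise $\ell^\infty$/$\ell^1$ estimates gives $|a_i| \le (\sum_j |(M^{-1})_{1j}| C_j)\,\ell^{1/3}$, and similarly for $b_i,c_i$. Plugging in the numerical entries of $M^{-1}$ and the optimized $C_j$ should yield the stated constants $2.415$, $1.850$, $1.764$ (with the strict inequalities coming from the fact that $\ell$ is a rational prime, hence the extremal case can be slightly undercut, or simply from rounding the true constants up). The main obstacle I expect is the explicit determination and reduction step: pinning down a fundamental domain for the rank-$2$ unit lattice and then choosing the per-embedding constants $C_j$ that are simultaneously valid for all $\ell$ and that, after passing through $M^{-1}$, give the tightest possible (and in particular the asymmetric) bounds on $a_i,b_i,c_i$ — this is a small but genuine optimization over the shape of the fundamental domain, and getting the constants down to the claimed values rather than something larger is where the care is needed. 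Everything else (discriminant computation, the numerical values of the embeddings, the norm identity $\prod_j|\alpha^{(j)}| = \ell$) is routine.
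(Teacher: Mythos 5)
Your proposal follows essentially the same route as the paper: it exploits principality of $\gothp_i$, reduces a generator modulo the rank-$2$ unit lattice via the logarithmic embedding (a CVP/fundamental-domain computation in the trace-zero plane) to get per-embedding bounds of the form $C_j\,\ell^{1/3}$, and then converts these into coordinate bounds on $a_i,b_i,c_i$ by inverting the Vandermonde matrix of the embeddings with row-wise $\ell^1$ estimates. This matches the paper's argument step for step, up to the specific choice of fundamental units and the numerical values of the intermediate constants.
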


\begin{proof}
  By abuse of notation, we identify $\mathbb Q(\eta_7)$ with the algebraic
  number field $\mathbb Q[X]/(X^3+X^2-2X-1)$ and we let $\sigma_1$, $\sigma_2$, $\sigma_3$ be the three real embeddings of
  $\Q(\eta_7)$ in $\R$. Let $\epsilon_1 = 1-\eta_7^2$ and
  $\epsilon_2=1+\eta_7$ be a pair of fundamental units, and let $\mu_i$ be a
  generator of $\gothp_i$. The logarithmic embedding $\varphi: x\mapsto
  (\log\lvert\sigma_1(x)\rvert, \log\lvert\sigma_2(x)\rvert,
  \log\lvert\sigma_3(x)\rvert)$ sends the set of generators of $\gothp_i$
  to the lattice generated by $\varphi(\epsilon_1)$ and
  $\varphi(\epsilon_2)$ translated by $\varphi(\mu_i)$. Solving a CVP for
  the projection of $\varphi(\mu_i)$ on the plane where the 3 coordinates
  sum-up to zero, we deduce a unit $\xi_i$ such that
  $\alpha_i=\xi_i\mu_i$ is a generator whose real embeddings are bounded
  by
  $$
  \lvert \sigma_1(\alpha_i) \rvert \leq 2.247\cdot \ell^{1/3},\quad
  \lvert \sigma_2(\alpha_i) \rvert \leq 1.803\cdot \ell^{1/3},\quad
  \lvert \sigma_3(\alpha_i) \rvert \leq 2.247\cdot \ell^{1/3}\,.
  $$
  Writing $\alpha_i=a_i+b_i\eta_7+c_i\eta_7^2$, the real embeddings can
  also be expressed as $(\sigma_1(\alpha_i), \sigma_2(\alpha_i),
  \sigma_3(\alpha_i))^T = V \cdot (a_i, b_i, c_i)^T,$ where $V$ is the
  Vandermonde matrix of $(\sigma_1(\eta_7), \sigma_2(\eta_7),
  \sigma_3(\eta_7))$. A numerical evaluation of its inverse 
  allows to translate the bounds on $\sigma_1(\alpha_i)$,
  $\sigma_2(\alpha_i)$, $\sigma_3(\alpha_i)$ into the claimed bounds on
  $a_i$, $b_i$, $c_i$.
\end{proof}

\section{Bounding the degrees of Cantor's division polynomials in genus
3}\label{sec:cantor}

The purpose of this section is to prove the following lemma on the Cantor's
division polynomials, which are explicit formulas for the endomorphism
corresponding to scalar multiplication~\cite{Ca94}.

\begin{lemma}\label{lemme10}
In genus 3, the degrees of Cantor's $\ell$-division polynomials are bounded by
$O(\ell^2)$. 
\end{lemma}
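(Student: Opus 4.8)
The plan is to recall the recursive structure of Cantor's division polynomials in genus 3 and to turn the recursion into a bound on degrees by tracking how the degree grows at each doubling/addition step. Cantor's polynomials encode, in Mumford coordinates, the divisor $\ell(P-\infty)$ for the generic point $P=(x,y)$ of $\C$; write this as $\langle u^{(\ell)}, v^{(\ell)}\rangle$ with $u^{(\ell)}(X)=X^3+\sum_{i=0}^2 (d_i^{(\ell)}(x)/d_3^{(\ell)}(x))\,X^i$ and $v^{(\ell)}(X)=y\sum_{i=0}^2 (e_i^{(\ell)}(x)/e_3^{(\ell)}(x))\,X^i$. The claim is that $\deg d_i^{(\ell)}$ and $\deg e_i^{(\ell)}$ are all $O(\ell^2)$. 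First I would set $\Theta(\ell)$ to be the maximum of the degrees of $d_i^{(\ell)}$, $e_i^{(\ell)}$ (over $i\in\{0,1,2,3\}$), and aim to prove $\Theta(\ell)=O(\ell^2)$ by analysing the effect of the two elementary operations of the group law: (i) given $\ell(P-\infty)$ and $\ell'(P-\infty)$ in reduced Mumford form, compute $(\ell+\ell')(P-\infty)$; (ii) specialise to the case $\ell'=\ell$ (doubling). Since an explicit reduced-Mumford-form addition involves only a bounded number of polynomial operations (a resultant/XGCD-type composition followed by one reduction step to get back to weight $\le 3$), each such operation multiplies the maximal degree by at most a fixed constant factor $\kappa$ and adds a bounded term. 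Iterating along a standard double-and-add chain of length $O(\log \ell)$ would only give an exponential bound, so that is not the right bookkeeping.

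Instead, the key is a \emph{quadratic} (rather than multiplicative) growth estimate: I would show that $\Theta(\ell+\ell') \le \Theta(\ell)+\Theta(\ell') + O(\max(\ell,\ell'))$, or more robustly a bound of the shape $\Theta(\ell) = O(\ell^2)$ proved directly by strong induction. Concretely, the heart of the matter is to bound the degrees produced by one application of Cantor's composition-and-reduction algorithm when the two inputs have $u$- and $v$-coordinates with numerator/denominator degrees at most $A$ and $A'$ respectively; a careful but elementary analysis of the resultant of the two $u$-polynomials and of the interpolation step in the composition shows the output has degrees $O(A+A'+ \text{(degree of }f))$. Feeding this into the addition chain, and using that at step $k$ the "small" summand is a fixed $P$ whose coordinates have bounded degree while the "large" summand is $j(P-\infty)$ with $j<\ell$, the accumulated degree after reaching $\ell(P-\infty)$ telescopes to $O(\ell \cdot \ell)=O(\ell^2)$; equivalently one checks the Ansatz $\Theta(\ell)\le c\,\ell^2$ is preserved by both doubling ($\Theta(2\ell)\le c(2\ell)^2$ follows from $\Theta(2\ell)\le 2\Theta(\ell)+O(\ell)$-type bounds being too weak, so one rather uses $\Theta(2\ell)\le 4\Theta(\ell)+O(\ell)$ which is exactly consistent with a quadratic) and by incrementing $\ell\mapsto \ell+1$.

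An alternative, cleaner route — and the one I would actually try to push through — is to bypass the recursion and instead identify the $d_i^{(\ell)}, e_i^{(\ell)}$ with (quotients of) known polynomials whose degrees are classically controlled, namely the $\ell$-division ideal of $\Jac(\C)$. The scheme $\Jac(\C)[\ell]$ is finite of length $\ell^6$, and the subscheme cut out by fixing the generic point $P$ on the first "coordinate" of a weight-$\le 3$ divisor has length $O(\ell^2)$ (this is the same $O(\ell^2)$ that governs the kernel computations in Section~\ref{sec:kernel}); the polynomial $d_3^{(\ell)}(x)$ is, up to units, the resultant expressing that $x$ is the abscissa of a point $P$ such that $\ell(P-\infty)$ has a Mumford $u$-polynomial of degree $<3$, and one reads off $\deg d_3^{(\ell)} = O(\ell^2)$ from the length of that scheme, with the numerators $d_i^{(\ell)}$ of no larger degree since $u^{(\ell)}$ is a polynomial of degree exactly $3$ in $X$ whose coefficients are regular functions on an affine scheme of the same size. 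The same argument applies to the $e_i^{(\ell)}$. I would prefer to present this geometric argument together with the recursion as a sanity check.

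The step I expect to be the main obstacle is making the degree estimate for a single composition-and-reduction step fully rigorous while keeping it \emph{additive} in the input degrees rather than multiplicative: a naive bound on the resultant of two cubics in $X$ whose coefficients are rational functions of degree $A$ gives degree $O(A)$ in the relevant variable, which is fine, but one must then verify that the subsequent reduction step (replacing a weight-$5$ or weight-$6$ divisor by its reduced weight-$\le 3$ representative, which involves dividing $f(X)-v(X)^2$ by $u(X)$ and taking numerator/denominator) does not blow the degree up by a factor depending on $\ell$. For genus $3$ this reduction takes a bounded number of iterations (the weight drops by a fixed amount each time), so the factor is an absolute constant; spelling this out carefully, and confirming that the denominators $d_3^{(\ell)}, e_3^{(\ell)}$ do not grow faster than the numerators, is where the real work lies. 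Everything else is a routine induction on $\ell$ along a double-and-add chain, using that $\deg f = 7$ is fixed.
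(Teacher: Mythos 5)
Your plan correctly identifies the target recursion ($\Theta(2\ell)\le 4\,\Theta(\ell)+O(\ell)$, which solves to $O(\ell^2)$), but neither of your two routes actually establishes it, and the first contains an internal inconsistency. You assert that one composition-and-reduction step applied to inputs of degrees $A,A'$ produces output of degree $O(A+A'+\deg f)$; if that were true, doubling would give $\Theta(2\ell)\le 2\,\Theta(\ell)+O(1)$ and hence $\Theta(\ell)=O(\ell)$, which is false --- Cantor's exact formula for $\deg d_3^{(\ell)}$ is genuinely quadratic in $\ell$. Conversely, the honest naive bound (the Sylvester determinant of two cubics whose coefficients have degree $A$ is a sum of products of six such coefficients, followed by two reduction steps) gives a multiplicative constant per doubling strictly larger than $4$, which yields $O(\ell^{\kappa})$ with $\kappa>2$. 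The entire difficulty is pinning the constant at exactly $4$, and this cannot be read off the group law without understanding the cancellations. The paper gets the factor $4$ from Cantor's specific determinantal recurrences (8.31) and (8.33): $\gamma_\ell$ times an explicit product of $\psi$'s equals a $4\times 4$ determinant each of whose terms is a product of four $\psi$'s (whose degrees are \emph{exactly} known, $\deg\psi_k^2=3(k^2-9)$) and four $\alpha/\gamma$'s (each bounded by the inductive constant $C$); the quadratic $\psi$-degrees cancel between the determinant and the normalizing product up to a term linear in $\ell$, leaving $4C+O(\ell)$. Without this structure (or an equivalent control of the common factors that get divided out), your double-and-add bookkeeping does not close.

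Your second, geometric route is genuinely different from the paper and is attractive in principle: $\deg d_3^{(\ell)}=O(\ell^2)$ does follow from $[\ell]^*\Theta\cdot\C=\ell^2(\Theta\cdot\C)$, and this is consistent with Cantor's exact formula for $d_3$ (which the paper notes is the \emph{easy} part). The gap is in passing to the other coefficients $d_0,d_1,d_2$ and the $e_i$, which is precisely what the paper says is missing from \cite{Ca94}. Saying the coefficients of $u^{(\ell)}$ are ``regular functions on an affine scheme of the same size'' bounds where their poles can lie, but not the \emph{orders} of those poles; to conclude, you would need to show that the Mumford coordinate functions, viewed as rational functions on the Jacobian, lie in $L(k\Theta)$ for an absolute constant $k$ (and handle the possibility that Cantor's normalized $d_i$ and $d_3$ share a large common factor, so that a bound on the reduced fraction does not immediately bound $\deg d_i$ itself). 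That pole-order statement is plausible and would make a clean alternative proof, but as written it is asserted, not proved, and it is exactly where the real content of the lemma lives.
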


In~\cite{Ca94}, there are exact formulas for the degrees of the leading
and the constant coefficients $d_3$ and $d_0$. However, there is no
formula or bounds for the degrees of the other coefficients of the
$\ell$-division polynomials. Still, our proof strongly relies on
\cite{Ca94} and we do not try to make it standalone: we assume that the
reader is familiar with this article and all references to expressions,
propositions or definitions in this proof are taken from this paper. 

For a polynomial $P$ whose coefficients are themselves univariate
polynomials, we denote by $\maxdeg(P)$ the maximum of the degrees of its
coefficients.

We first prove a bound on the degrees of the coefficients of the
quantities $\alpha_r$ and $\gamma_r$ defined in~\cite{Ca94}, from which
the wanted bounds will follow. The key tools are the recurrence formulas
(8.31) and (8.33) that relate quantities at index $r$ to quantities at
index around $r/2$, in a similar fashion as for the division polynomials
of elliptic curves.
More precisely, the following lemma shows that when the index $r$ is
(roughly) doubled, $\maxdeg \alpha_r$ and $\maxdeg \gamma_r$
are roughly multiplied by $4$, which leads to the expected
quadratic growth.

\begin{lemma}\label{lemme11}
  Let $\ell\ge 12$, and assume that for all $i\le (\ell+9)/2$ the degrees
  $\maxdeg \alpha_i$ and $\maxdeg\gamma_i$ are bounded by $C$, then 
  $\maxdeg \alpha_\ell$ and $\maxdeg\gamma_\ell$ are bounded by
  $4C+36\ell+108$.
\end{lemma}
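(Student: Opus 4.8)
The plan is to prove Lemma~\ref{lemme11} by a careful bookkeeping of degrees through Cantor's recurrence relations (8.31) and (8.33). The starting point is to recall that these recurrences express $\alpha_r$ and $\gamma_r$ (for $r$ around $\ell$) as polynomial combinations of the quantities $\alpha_i$, $\gamma_i$ at indices $i$ roughly equal to $r/2$, together with a bounded number of fixed ``universal'' polynomials coming from the curve equation (the coefficients of $f$, and quantities like the discriminant-type factors that appear in Cantor's formulas). Since by hypothesis every such $\alpha_i,\gamma_i$ with $i\le(\ell+9)/2$ has $\maxdeg$ bounded by $C$, the degree of a product of a bounded number of them — the precise number being read off from the shape of (8.31) and (8.33) — is bounded by $4C$ plus a correction term coming from the universal polynomials, whose degrees are linear in $\ell$. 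The constants $36\ell+108$ are exactly what one obtains by adding up these linear-in-$\ell$ contributions.

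First I would fix notation: for each term appearing on the right-hand sides of (8.31) and (8.33), write it as a monomial in the $\alpha_i,\gamma_i$ (contributing at most $C$ to $\maxdeg$ for each factor) times a product of fixed polynomials in $x$ whose degrees are either absolute constants or linear in the index. I would then count, separately for the $\alpha$-recurrence and the $\gamma$-recurrence, the maximal number of $\alpha_\bullet/\gamma_\bullet$ factors in any single term; Cantor's formulas are built so that this number is at most $4$ (this is the genus-3 analogue of the fact that elliptic division polynomials satisfy $\psi_{2n}\sim\psi_n^4$-type relations), which gives the $4C$. Next I would bound the degrees of the auxiliary polynomial factors: these are things like powers of $f$, of $f'$, or of the ``$y$-free'' normalizations Cantor uses, each raised to a bounded power, and each of degree $O(\ell)$; summing the finitely many such contributions and being generous with the constants yields the explicit bound $36\ell+108$. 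Finally I would check the edge conditions — that the indices called on the right-hand side genuinely stay $\le(\ell+9)/2$ when $\ell\ge 12$ (this is why that hypothesis and that particular threshold appear), and that both the even and odd cases of the recurrence are covered — so that the induction hypothesis applies to every term invoked.

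The main obstacle I expect is not conceptual but a matter of faithfully extracting the exact shape of (8.31) and (8.33) from~\cite{Ca94} and tracking every polynomial factor's degree, including the ones that are easy to overlook (the denominators $\tilde d_3,\tilde e_3$-type normalizing factors, and the powers of the curve discriminant or of $f$ that Cantor inserts to keep things polynomial). Getting the count of $\alpha/\gamma$-factors exactly right is essential: if it were $5$ rather than $4$ the whole quadratic bound would collapse, so I would double-check this by comparing against the known elliptic-curve specialization and against Cantor's own exact degree formulas for $d_0$ and $d_3$. Once Lemma~\ref{lemme11} is in hand, Lemma~\ref{lemme10} follows by a routine induction: the map $C\mapsto 4C+36\ell+108$ applied $O(\log\ell)$ times, starting from a constant base case for small indices, produces a bound of the form $O(\ell^2)$ because $\sum_{j\ge 0} 4^{j}\cdot(36\cdot(\ell/2^{j})+108)$ telescopes to $O(\ell^2)$; and the degrees of the actual Cantor division polynomials $d_i,e_i$ are bounded by $\maxdeg$ of suitable $\alpha_\ell,\gamma_\ell$, up to the fixed combinatorial overhead already absorbed in the $O()$.
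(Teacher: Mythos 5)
Your overall strategy is the same as the paper's: use Cantor's recurrences (8.31) and (8.33) to express $\alpha_\ell,\gamma_\ell$ in terms of quantities at indices near $\ell/2$, count four $C$-bounded factors per term, and add a linear-in-$\ell$ correction. But two of the mechanisms you describe are wrong, and as written your plan would not deliver the stated bound.

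First, the correction term $36\ell+108$ does \emph{not} come from ``universal polynomials of degree $O(\ell)$'' such as powers of $f$ or $f'$. The auxiliary quantities in Cantor's recurrences are the $\psi_k$, whose degrees grow \emph{quadratically} in $k$ (one has $\deg\psi_k^2=3(k^2-9)$), so each individual $\psi$-factor on the right-hand side has degree $\Theta(\ell^2)$, not $O(\ell)$. If you simply summed these contributions as you propose, the recursion would read $C\mapsto 4C+\Theta(\ell^2)$ and Lemma~\ref{lemme10} would degrade to $O(\ell^2\log\ell)$. The linear term arises only because (8.31) expresses $\gamma_\ell[h]$ \emph{multiplied by} the product $\psi_{s-r}\psi_{r-2}\psi_{s-2}\psi_{r-1}\psi_{s-1}$ as a determinant: the upper bound $4\deg\psi_s$ on the $\psi$-part of each determinant term nearly cancels against the lower bound $4\deg\psi_{r-2}$ on the normalizing product, and the difference $\deg\psi_s^4-\deg\psi_{r-2}^4=6\bigl(s^2-(r-2)^2\bigr)$ is linear in $\ell$ precisely because $s$ and $r-2$ differ by a bounded amount. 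Your plan never mentions this cancellation, which is the heart of the argument.

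Second, getting the count of $C$-bounded factors down to $4$ is not automatic from the shape of (8.31). The matrix $\mathcal{E}_{rs}[h]$ as defined has \emph{two} columns of $\alpha\alpha$ products and one column of $\gamma\gamma$ products, so a raw expansion of the determinant yields \emph{six} $\alpha/\gamma$ factors per term, i.e.\ $6C$, which (as you yourself note) destroys the quadratic bound. The paper fixes this by invoking identity (8.8) to rewrite the first column ($\alpha_{\bullet}\alpha_{\bullet}[0]$ entries) as products $\psi_{\bullet}\psi_{\bullet}$ of explicitly known degree, after which each term has exactly four $C$-bounded factors and four $\psi$-factors. You correctly flag that the count ``4'' is critical and propose to check it against the elliptic case, but you supply no mechanism for achieving it; without the column rewriting your bound would be $6C+\cdots$, giving $O(\ell^{\log_2 6})$ overall. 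These are the two concrete steps you would need to add for the proof to go through; the index bookkeeping ($r+s-5=\ell$, all indices at most $(\ell+9)/2$, and $\ell\ge12$ to keep $r,s\ge5$) you describe correctly.
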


\begin{proof}
  We first deal with the bound on $\maxdeg\gamma_\ell$.
  Let us consider $r$ and $s$ around $\ell/2$ such that $\ell=r+s-5$:
  we take either $r=s-3=\ell/2+1$ if $\ell$ is even, or $r=s-4=(\ell+1)/2$
  otherwise. 

  From Equations~(8.30) and (8.31), the degree of
  $\gamma_{\ell}[h]\psi_{s-r}\psi_{r-2}\psi_{s-2}\psi_{r-1}\psi_{s-1}$
  is that of the determinant of the matrix $\mathcal{E}_{rs}[h]$ defined
  by:
  \[
    \mathcal{E}_{rs}[h] =
    \begin{pmatrix}
      \alpha_{r-3}\alpha_{s}[0] & \alpha_{r-3}\alpha_{s}[1] & 
      \psi_{r-3}\psi_{s} & \gamma_{r-3}\gamma_{s}[h] \\

      \alpha_{r-2}\alpha_{s-1}[0] & \alpha_{r-2}\alpha_{s-1}[1] & 
      \psi_{r-2}\psi_{s-1} & \gamma_{r-2}\gamma_{s-1}[h] \\

      \alpha_{r-1}\alpha_{s-2}[0] & \alpha_{r-1}\alpha_{s-2}[1] & 
      \psi_{r-1}\psi_{s-2} & \gamma_{r-1}\gamma_{s-2}[h] \\

      \alpha_{r}\alpha_{s-3}[0] & \alpha_{r}\alpha_{s-3}[1] & 
      \psi_{r}\psi_{s-3} & \gamma_{r}\gamma_{s-3}[h] \\
    \end{pmatrix} .
  \]

  Therefore we have an expression for the degrees of the coefficients of
  $\gamma_{\ell}$ in
  terms of objects at index around $r$ and $s$:
  $$ \deg \gamma_{\ell}[h] \le \deg\det \mathcal{E}_{rs}[h] -
  \deg(\psi_{r-2}\psi_{s-2}\psi_{r-1}\psi_{s-1}).$$
  In this last formula, the factor $\psi_{s-r}$ has been omitted, because $s-r$
  is either 3 or 4, and by (8.17) this has non-negative degree in any case. 
  Thus, we simply bounded it below by $0$ in the previous inequality.
  Before entering a more detailed analysis, we use Equation~(8.8) to
  rewrite the first column with expressions for which we have exact
  formulas for the degree:
  \[ \mathcal{E}_{rs}[h] =
    \begin{pmatrix}
      \psi_{r-4}\psi_{s-1} & \alpha_{r-3}\alpha_{s}[1] & 
      \psi_{r-3}\psi_{s} & \gamma_{r-3}\gamma_{s}[h] \\

      \psi_{r-3}\psi_{s-2} & \alpha_{r-2}\alpha_{s-1}[1] & 
      \psi_{r-2}\psi_{s-1} & \gamma_{r-2}\gamma_{s-1}[h] \\

      \psi_{r-2}\psi_{s-3} & \alpha_{r-1}\alpha_{s-2}[1] & 
      \psi_{r-1}\psi_{s-2} & \gamma_{r-1}\gamma_{s-2}[h] \\

      \psi_{r-1}\psi_{s-4} & \alpha_{r}\alpha_{s-3}[1] & 
      \psi_{r}\psi_{s-3} & \gamma_{r}\gamma_{s-3}[h] \\
  \end{pmatrix}.\]

  The determinant of $\mathcal{E}_{rs}[h]$ is the sum of products of 4
  $\psi$ factors and 4 $\alpha$ or $\gamma$ factors. The degrees of the
  former are explicitly known, while by hypothesis we have
  upper bounds on the latter, since all the indices are at most
  $(\ell+9)/2$. We can then deduce an upper bound on the
  degree of this determinant. All the $\psi_i$ have indices with $i$ in
  the range $[r-4, s]$ (remember that $r\le s$), and since their degrees
  increases with the indices, we can upper bound the degree of the products of
  the four $\psi$ factors by $4\deg \psi_s$. 
  Therefore we have
  $$ \deg \det \mathcal{E}_{rs}[h] \le 4(\deg \psi_s + C).$$
  In order to deduce an upper bound on $\maxdeg \gamma_{\ell}$, it remains to
  get a lower bound on the degree of the
  $\deg(\psi_{r-2}\psi_{s-2}\psi_{r-1}\psi_{s-1})$ term, and again by
  monotonicity of the degree in the index, we lower bound it by $4\deg
  \psi_{r-2}$. So finally, we get
  $$ \maxdeg \gamma_{\ell} \le 4C + (\deg\psi_s^4 - \deg\psi_{r-2}^4).$$
  Using (8.16) and (8.17), we deduce that for all $k$, we have
  $\deg(\psi_k^2)=3(k^2-9)$ and substituting this value and the expression
  of $r-2$ and $s$ in term of $\ell$, we obtain
  $$ \deg\psi_s^4 - \deg\psi_{r-2}^4 = 
  \left\{\begin{array}{ll}
    30\ell+90 & \text{if $\ell$ is even,} \\
    36\ell+108 & \text{if $\ell$ is odd,} \\
  \end{array}\right. $$
  and the result follows for $\maxdeg \gamma_\ell$.

  The proof for $\maxdeg \alpha_\ell$ follows the same line.
  Using the matrix $\mathcal{F}_{rs}[h]$ defined in (8.32) in a similar
  way as we used the matrix $\mathcal{E}_{rs}[h]$ and with the help of
  the formula (8.33), we end up with the following bounds
  $$ \maxdeg \alpha_{\ell} \le
  \left\{\begin{array}{ll}
    4C + 30\ell - 30 & \text{if $\ell$ is even,} \\
    4C + 36\ell - 36 & \text{if $\ell$ is odd,} \\
  \end{array}\right. $$
  which are stricter than our target.
  
  Finally, the bound $\ell\ge 12$ is necessary to ensure that the
  quantities $r$ and $s$ are at least 5, as required in~\cite{Ca94} to
  apply the formulas (8.31) and (8.33).
\end{proof}

We can now finish the proof of Lemma~\ref{lemme10}. We define two
sequences $(\ell_i)_{i\ge0}$ and $(C_i)_{i\ge0}$ as follows: let
$\ell_0=12$ and let $C_0$ be a bound on the degrees of the coefficients of
all the $\alpha_i$ and $\gamma_i$ for $i\le \ell_0$. Then for all
$i\ge1$, we define the sequences inductively by
$$ \left\{ \begin{array}{l}
  \ell_{i+1} = 2\ell_i-9 \\
  C_{i+1} = 4C_i + 36 \ell_{i+1} + 108.\\
\end{array}\right. $$
By Lemma~\ref{lemme11}, for all $i$ and all $\ell \le \ell_i$, the
degrees $\maxdeg\alpha_\ell$ and $\maxdeg\gamma_\ell$ are bounded by $C_i$.
The expression $\ell_i = (\ell_0-9)2^i+9 = 3\cdot 2^i + 9$
can be derived directly from
the definition and substituted in the recurrence formula of $C_{i+1}$ to
get
$ C_{i+1} = 4C_i + 216\cdot 2^i + 432$.
This recurrence can be solved by setting $\Gamma_i = C_i + 108\cdot 2^i +
144$, so that $\Gamma_{i+1} = 4\,\Gamma_i$, and we obtain
$C_i = (C_0 + 252)\, 4^i - 108\cdot 2^i - 144$.
Finally, for any $\ell$, we select the smallest $i$ such that $\ell\le
\ell_i$. This value of $i$ is $\lceil \log_2((\ell-9)/3) \rceil$.
The corresponding
bound for $\maxdeg \alpha_\ell$ and $\maxdeg\gamma_\ell$ is then $C_i$,
which grows like $O(\ell^2)$ (and we remark that the effect of the
ceiling can make the constant hidden in the $O()$ expression grow by a factor at most 3).
\smallskip
 
Using the expression (8.10), we have $\maxdeg\delta_\ell\le
\maxdeg\alpha_\ell + \maxdeg\gamma_\ell$, and therefore the bound
$O(\ell^2)$ also applies to the degrees of the coefficients of
$\delta_\ell$. And using the formula (8.13), the same holds as well for
the coefficients of $\epsilon_\ell/y$.

This concludes the proof of Lemma~\ref{lemme10}.

\section{Experimental results}\label{sec:experiments}

In order to evaluate the practicality of our algorithm, we have tested it
on one of the families of genus-3 hyperelliptic curves having explicit RM
given in \cite[Theorem~1]{TTV}. Formulas for their RM endomorphisms are
described in~\cite{KoSm}: for $t\ne \pm 2$, the curve $\C_t$
with equation
$$ y^2 = x^7 -7x^5+14x^3-7x + t,$$
admits an endomorphism given in Mumford representation by
$$\eta_7(x,y)=\langle X^2+{11}\,xX/2+x^2-{16}/{9},y\rangle.$$
The fact that this expression has degree 2 while one would generically
expect a degree 3 is no accident: it comes from the construction
in~\cite{TTV} of the endomorphism as a sum of two automorphisms on a
double cover of the curve.
We have $\eta_7^3 + \eta_7^2-2\eta_7-1 = 0$, so that 
the ring $\Z[\eta_7]$ is isomorphic to the ring of integers $\Z[2\cos(2\pi/7)]$ of the real
subfield of the cyclotomic field $\Q(e^{2i\pi/7})$.
All the numerical data in this section have been obtained for the
parameter $t=42$, on the prime field $\F_p$ with $p=2^{64}-59$.

In our practical computations, the main differences with the theoretical
description are the following: we use Gröbner basis algorithms instead of
resultants, we consider also small non-split primes $\ell$ and small powers,
and we finish the computation with a parallel collision search. The
source code for our experiments is available at
\url{https://members.loria.fr/SAbelard/RMg3.tgz}.

\subsection{Computing modular information with Gröbner
basis}\label{sec:modinfoGB}

Although the polynomial system resolution using resultants has a
complexity in $\softO(\ell^4)$, the real cost for small values of $\ell$
is already pretty large. In the resolution method described in
Section~\ref{sec:resultants},
each bivariate resultant is computed by evaluation / interpolation and hence
requires the computation of many univariate resultants. We illustrate this by counting the number of
univariate resultants to perform and their degrees for the main step of
the resolution (the part that reaches the peak complexity). We also
measure the cost of such resultant computations using the NTL 10.5.0 and
FLINT 2.5.2 libraries, both linked against GMP 6, when the base field is
$\F_{2^{64}-59}$. These costs do not include the evaluation /
interpolation steps which might also be problematic for large instances,
because they are hard to parallelize.

\begin{center}
\begin{tabular}{|l|l|l|r|r|}
  \hline
  $\ell$ & \#res & Deg & Cost (NTL) & Cost (FLINT) \\
  \hline
  13  & 525M & 16,000 & 1,850 days    &  735 days  \\ 
  \hline
  29  & 12.8G & 80,000 & 310,000 days & 190,000 days \\
  \hline
\end{tabular}
\end{center}

We were more successful with the direct approach using Gröbner bases that
we now describe.
For computing the kernel of a given endomorphism, we
computed a Gröbner basis of the system~\eqref{eq:kernel_endo} with some small modifications. First, we
observe that the only occurrences of $y_1$ and $y_2$ are within the
monomial $y_1 y_2$. Consequently, we can remove one variable by replacing
each occurrence of $y_1 y_2$ by a fresh variable $y$.  Next, we need to
make the system $0$-dimensional by encoding the fact that $d_3(x_3)$ and
$\widetilde{d_3}(x_1,x_2)$ are nonzero. This is done by introducing
another
fresh variable $t$ and by adding the polynomial $S(x_1, x_2, x_3)t - 1$
to the system, where $S(x_1, x_2, x_3)$ is the squarefree part of
$d_3(x_3)\widetilde{d_3}(x_1, x_2)$.  Finally, since each
polynomial is symmetric with respect to the transposition of the
variables $x_1$ and $x_2$, we can rewrite the equations
using the symmetric polynomials $s_1 = x_1+x_2$ and $s_2=x_1\,x_2$. This
divides by two the degree in $x_1$ and $x_2$ of the equations.
We end-up with a system in 5 variables.

The whole construction can be slightly modified to compute the pre-image
of a given divisor by the endomorphism: to model $\alpha(D) = Q$, we
write $D=P_1+P_2+P_3-3\infty$ and solve for $\alpha(P_1-\infty) +
\alpha(P_2-\infty) = Q - \alpha(P_3-\infty)$. In that case, the variable
$y_3$ gets involved in all the equations, so that we get a system in 6
variables.

For $\ell=2$, the $2$-torsion elements are easily deduced from the
factorization of $f$, and by computing a pre-image of a $2$-torsion
divisor, we get a point in $J[4]$ from which we could deduce $a,b,c\bmod
4$. Dividing again by $2$ was too costly, due to the fact that the
$4$-torsion point was in an extension of degree 4. For
$\ell=3$, which is an inert prime, we ran the kernel computation for the
multiplication-by-3 endomorphism, without using the RM property.
The norm being 27, this is the largest modular computation that we performed
(and the most costly in terms of time and memory).
The prime $\ell=7$ ramifies in $\Z[\eta_7]$ as the cube of the ideal
generated by $\alpha_7 = -2-\eta_7+\eta_7^2$. The kernel of $\alpha_7$ can
be computed but it yields only one linear relation in $a,b,c\bmod 7$.
Dividing the kernel elements by $\alpha_7$ would give more information, but
again, this computation did not finish due to the field extension in
which the divisors are defined. The first split prime is $\ell=13$. We
use the following small generators: $(13) =
(2-\eta_7-2\eta_7^2)(-2+2\eta_7+\eta_7^2)(3+\eta_7-\eta_7^2)$, which
seem to produce the polynomial systems with the smallest degrees.
For instance, the apparently smaller element $1+\eta_7^2$ of norm 13
yields equations of much higher degrees $7, 71, 72, 73, 72$.
The next split prime is
29, which would maybe have been feasible, but was not necessary for our
setting.  In the following table,
we summarize the data for these systems, that were obtained with
Magma V2.23-4 on a Xeon E7-4850v3 at 2.20GHz, with 1.5 TB RAM.%
\footnote{The 
F4 algorithm can be highly sensitive to the modelling of the problem and we
refer to the source code.
In particular, thanks to serendipity, we saved a
factor greater than 12 in the runtime for $\ell=7,13$ by forgetting to take the squarefree part of the
saturation
polynomial. We have no explanation for this phenomenon.}

\begin{center}
  \begin{tabular}{|l|c|c|c|c|l|}
    \hline
    mod $\ell^k$ & \#var & degree of each eq. & time & memory & $a,b,c\bmod \ell^k$ \\
    \hline
    2 & --- & --- & --- & --- & $0,0,0$ \\
    4 (inert${}^2$)& 6 & $7,7,14,15,15,10$ & 1 min & negl. & $2,2,2$ \\
    3 (inert) & 5 & $7,53,54,55,26$ & 14 days & 140 GB & $1,2,1$ \\
    $7=\gothp_1^3$ & 5 & $7,35,36,37,36$ & 3.5h & 6.6 GB & {\small $a+2b+4c\equiv 2$}\\
    $13=\gothp_1\gothp_2\gothp_3$ & 5 & $7,44,45,46,52$ & $3\times3$ days & 41 GB &
    $12,10,9$\\
    $29=\gothp_1\gothp_2\gothp_3$ & 5 & $7,92,93,94,100$ &
    {\small $>\!3\!\times\!2$ weeks} & {\small $>\!0.8$ TB} & 
    --- \\
    \hline
  \end{tabular}
\end{center}

\subsection{Parallel collision search for RM curves}

The classical square-root-com\-plexity search in genus 3 requires $O(q)$ group
operations~\cite{Elkies98}. For RM curves, this can be improved by
searching for the coefficients $a, b, c$ of $\psi =
\pi+\pi^\vee$ in $\Z[\eta]$. This readily yields a complexity in
$O(q^{3/4})$, using the equation $a D + b\eta(D) + c\eta^2(D) = (q+1) D$,
that must be satisfied for any rational divisor $D$. While a baby-step
giant-step approach is immediate to design, it needs $O(q^{3/4})$ space
and this is the bottleneck. 
A low-memory, parallel version of this search can be obtained with the
algorithm of~\cite{LMPMCT}, where the details are given only for a
2-dimensional problem, while here this is a 3-dimensional problem. But we
did not hit any surprise when adapting the parameters to
our case.  Also, just like in~\cite{LMPMCT}, including some anterior
modular knowledge is straightforward: if $a,b,c$ are known modulo $m$,
the expected time is in $O(q^{3/4}/m^{3/2})$.

We wrote a dedicated C implementation with a few lines of assembly to
speed-up the additions and multiplications in $\F_p$, taking advantage of
the special form of $p$. This implementation performs
$10.7M$ operations in the Jacobian per second using 32
(hyperthreaded) threads of a 16-core bi-Xeon E5-2650 at 2 GHz.
We used the knowledge of $\psi$ modulo $156$ but not of the known
relation modulo 7 for simplicity (there is no obstruction to using it and
saving an additional $7^{1/2}$ factor).

After computing about 190,000 chains of
average length 32,000,000, we got a collision, from which we deduced
 $$ \psi = 2551309006 + 2431319810\ \eta_7 -847267802\ \eta_7^2,$$
and the coefficients of the characteristic polynomial $\chi_\pi$ of the Frobenius
are then
\begin{small}
  $$
  \sigma_1 = 986268198,\ \ \sigma_2 = 35389772484832465583,\ \
  \sigma_3 = 10956052862104236818770212244.$$
\end{small}\leavevmode
The number of group operations that were done is slightly less than
$43\, (p^{3/4}/156^{3/2})$. This factor 43 is close to the average that we
observed in our numerous experiments with smaller sizes. Scaled on a
single (physical) core, we can estimate the cost of this collision search 
to be 105 core-days.

\bibliographystyle{plain}
\bibliography{biblio}

\end{document}